 \font \eightrm=cmr8
 \newcommand{\nc}{\newcommand}
\newtheorem{thm}{Theorem}
\newtheorem{exam}{Example}
\newtheorem{cor}[thm]{Corollary}
\newtheorem{lem}[thm]{Lemma}
\newtheorem{prop}[thm]{Proposition}
\newtheorem{rmk}[thm]{Remark}
\newcommand{\tree}{\scalebox{0.3}{{\parbox{0.5pc}{
  \begin{picture}(30,45) (75,-60)
    \SetWidth{1.5}
    \SetColor{Black}
    \Line(90,-30)(75,-60)
    \Line(90,-30)(105,-60)
    \Line(90,-15)(90,-30)
  \end{picture}}}}}
\newcommand{\treeC}{\scalebox{0.2}{{\parbox{0.5pc}{
 \begin{picture}(90,105) (75,-30)
    \SetWidth{2.4}
    \SetColor{Black}
    \Line(90,0)(75,-30)
    \Line(120,60)(90,0)
    \Line(90,0)(105,-30)
    \Line(105,30)(135,-30)
    \Line(120,60)(165,-30)
    \Line(120,75)(120,60)
  \end{picture}
}}}}
\newcommand{\treeD}{\scalebox{0.2}{{\parbox{0.5pc}{
 \begin{picture}(90,105) (75,-30)
    \SetWidth{2.4}
    \SetColor{Black}
    \Line(90,0)(75,-30)
    \Line(120,60)(90,0)
    \Line(90,0)(105,-30)
    \Line(120,60)(165,-30)
    \Line(120,75)(120,60)
    \Line(150,0)(135,-30)
  \end{picture}
}}}}
\newcommand{\treeF}{\scalebox{0.2}{{\parbox{0.5pc}{
 \begin{picture}(90,105) (75,-30)
    \SetWidth{2.4}
    \SetColor{Black}
    \Line(90,0)(75,-30)
    \Line(120,60)(90,0)
    \Line(120,60)(165,-30)
    \Line(120,75)(120,60)
    \Line(135,30)(105,-30)
    \Line(120,0)(135,-30)
  \end{picture}
}}}}
\newcommand{\treeG}{\scalebox{0.2}{{\parbox{0.5pc}{
 \begin{picture}(90,105) (75,-30)
    \SetWidth{2.4}
    \SetColor{Black}
    \Line(90,0)(75,-30)
    \Line(120,60)(90,0)
    \Line(120,60)(165,-30)
    \Line(120,75)(120,60)
    \Line(105,30)(135,-30)
    \Line(120,0)(105,-30)
  \end{picture}
}}}}
\nc{\ignore}[1]{{}}
\nc{\mrm}[1]{{\rm #1}}
\nc{\dirlim}{\displaystyle{\lim_{\longrightarrow}}\,}
\nc{\invlim}{\displaystyle{\lim_{\longleftarrow}}\,}
\nc{\vep}{\varepsilon} \nc{\ep}{\epsilon}
\nc{\sigmat}{\widetilde\sigma}
\nc{\ostar}{\overline{*}}
\nc{\mchar}{\mrm{Char}}
\nc{\Hom}{\mrm{Hom}}
\nc{\id}{\mrm{id}}
\nc{\remark}{\noindent{\bf{Remark:}}}
\nc{\remarks}{\noindent{\bf{Remarks:}}}
 \nc{\delete}[1]{}
 \nc{\grad}[1]{^{({#1})}}
 \nc{\fil}[1]{_{#1}}
\nc{\BA}{{\Bbb A}} \nc{\CC}{{\Bbb C}} \nc{\DD}{{\Bbb D}}
\nc{\EE}{{\Bbb E}} \nc{\FF}{{\Bbb F}} \nc{\GG}{{\Bbb G}}
\nc{\HH}{{\Bbb H}} \nc{\LL}{{\Bbb L}} \nc{\NN}{{\Bbb N}}
\nc{\PP}{{\Bbb P}} \nc{\QQ}{{\Bbb Q}} \nc{\RR}{{\Bbb R}}
\nc{\TT}{{\Bbb T}} \nc{\VV}{{\Bbb V}} \nc{\ZZ}{{\Bbb Z}}
\nc{\Cal}[1]{{\mathcal {#1}}}
\nc{\mop}[1]{\mathop{\hbox {\rm #1} }}
\nc{\smop}[1]{\mathop{\hbox {\eightrm #1} }}
\nc{\mopl}[1]{\mathop{\hbox {\rm #1} }\limits}
\nc{\frakg}{{\frak g}}
\nc{\g}[1]{{\frak {#1}}}
\def \restr#1{\mathstrut_{\textstyle |}\raise-8pt\hbox{$\scriptstyle #1$}}
\def \srestr#1{\mathstrut_{\scriptstyle |}\hbox to
  -1.5pt{}\raise-4pt\hbox{$\scriptscriptstyle #1$}}
\nc{\wt}{\widetilde}
\nc{\wh}{\widehat}
\nc{\un}{\hbox{\bf 1}}
\nc{\redtext}[1]{\textcolor{red}{#1}}
\nc{\bluetext}[1]{\textcolor{blue}{#1}}
\nc\fleche[1]{\mathop{\hbox to #1 mm{\rightarrowfill}}\limits}
\def\semi{\mathrel{\times}\kern -.85pt\joinrel\mathrel{\raise 1.4pt\hbox{${\scriptscriptstyle |}$}}}
\begin{document}
%%%%%%%%%%%%%%%%%%%%%%%%%%%%%%%%%%%%%%%%%%%%%
%%%%%%%%%%%%%%%%%%%%%%%%%%%%%%%%%%%%%%%%%%%%%

\title[A Magnus- and Fer-type formula in dendriform algebras]
      {A Magnus- and Fer-type formula in dendriform algebras}

\author{Kurusch Ebrahimi-Fard}
\address{Max-Planck-Institut f\"ur Mathematik,
         Vivatsgasse 7,
         D-53111 Bonn, Germany.}
 \email{kurusch@mpim-bonn.mpg.de}
 \urladdr{http://www.th.physik.uni-bonn.de/th/People/fard/}

\author{Dominique Manchon}
\address{Universit\'e Blaise Pascal,
         C.N.R.S.-UMR 6620,
         63177 Aubi\`ere, France}
         \email{manchon@math.univ-bpclermont.fr}
         \urladdr{http://math.univ-bpclermont.fr/~manchon/}

%%%%%%%%%%%%%%%%%%%%%%%%%%%%%%%%%%%%%%%%%%%%%%%%%%%%%%%%%%%%%%%%%%%
\date{December, 2007\\
\noindent {\footnotesize{${}\phantom{a}$ Mathematics Subject
Classification 2000: Primary: 16W25, 17A30, 17D25, 37C10 Secondary: 05C05, 81T15 \\ 
keywords: linear differential equation; linear integral equation; Magnus expansion; Fer expansion; dendriform algebra; 
pre-Lie algebra; Rota--Baxter algebra; binary rooted trees. }}
}
%%%%%%%%%%%%%%%%%%%%%%%%%%%%%%%%%%%%%%%%%%%%%%%%%%%%%%%%%%%%%%%%%%%

\begin{abstract}
We provide a refined approach to the classical
Magnus~\cite{Magnus} and Fer expansion~\cite{Fer}, unveiling a new
structure by using the language of dendriform and pre-Lie
algebras. The recursive formula for the logarithm of the solutions
of the equations $X = 1 + \lambda a \prec X$ and $Y = 1 - \lambda
Y \succ a$ in $A[[\lambda]]$ is provided, where $(A,\prec,\succ)$
is a dendriform algebra. Then, we present the solutions to these
equations as an infinite product expansion of exponentials. Both
formulae involve the pre-Lie product naturally associated with the
dendriform structure. Several applications are presented.
\end{abstract}

\maketitle
%%%%%%%%%%%%%%%%%%%%%%%%%%%%%%%%%%%%%%%%%%%%%%%%%%%%%%%%%%%%%%%%%%%
%%%%%%%%%%%%%%%%%%%%%%%%%%%%%%%%%%%%%%%%%%%%%%%%%%%%%%%%%%%%%%%%%%%

\tableofcontents

%%%%%%%%%%%%%%%%%%%%%%%%%%%%%%%%%%%%%%%%%%%%%%%%%%%%%%%%%%%%%%%%%%%
%%%%%%%%%%%%%%%%%%%%%%%%%%%%%%%%%%%%%%%%%%%%%%%%%%%%%%%%%%%%%%%%%%%

\section{Introduction}
\label{sect:intro}

Let us start by emphasizing that the results presented in the
sections following this introduction are an extension of findings
obtained by the authors together with Fr\'ed\'eric~Patras in an
earlier work~\cite{EMP07b}. The underlying theme of our paper is
to bring together Magnus', Fer's and Baxter's classical work on
linear differential equations respectively the corresponding
integral equations, using the language of Loday's dendriform
algebras. Skipping details, in this introduction we try to sketch
briefly the general picture behind our results.

Recall that Magnus~\cite{Magnus} and Fer~\cite{Fer}, as well as
Baxter~\cite{Baxter}, start their papers by recalling the
classical initial value problem:
\begin{equation}
\label{IVP1}
   \dot{\Phi}(t) := \frac{d}{dt}\Phi(t) = \Psi(t)\Phi(t),\qquad\ \Phi(0)=1.
\end{equation}

Magnus considers it in a non-commutative context, i.e. $A:=\Psi$
and $Y:=\Phi$ are supposed to be linear operators depending on a
real variable $t$. Here, $1$ denotes the identity operator. For
the linear operator $\Omega(t)$ depending on $A$ and with
$\Omega(0)=0$, such that:
$$
    Y(t) = \exp\Bigl( \int^t_0 \dot{\Omega}(s)\, ds \Bigr)
           = \sum_{n \ge 0} \frac{\Omega(t)^n}{n!},
$$
Magnus obtains a differential equation leading to the recursively
defined expansion named after him:
\begin{equation}
\label{MagnusOmega}
    \Omega(t) = \int^t_0 \dot{\Omega}(s)\, ds
              = \int_0^tA(s)\, ds + \int^t_0 \sum_{n > 0} \frac{B_n}{n!} ad_{\int_0^s \dot{\Omega}(u)\,du}^{(n)}[A(s)]\, ds.
\end{equation}
Here, as usual, $ad_f[g]:=fg-gf:=[f,g]$. The coefficients $B_n$
are the Bernoulli numbers defined via the generating series:
$$
    \frac{z}{\exp(z)-1} = \sum_{m \ge 0} \frac{B_m}{m!}z^m
                        = 0 - \frac{1}{2}z + \frac{1}{12}z^2 - \frac{1}{720}z^4 + \cdots.
$$
Observe that $B_{2m+3}=0$, $m \geq 0$ and recall for later use:
$$
    \frac{\exp(z)-1}{z}=\int^1_0 \exp(sz) \, ds.
$$
For more details see for
instance~\cite{Magnus,Gelfand,Iserles00,KlOt,MielPleb,OteoRos,Strichartz,Wilcox}.
Observe also that Magnus' expansion (\ref{MagnusOmega}) reduces to
$\int_0^t A(s)\, ds$ if all commutators disappear, e.g. in a
commutative setting, leading to the classical exponential
solution:
\begin{equation}
\label{clexpSol}
    Y(t)=\exp\Bigl(\int_0^t A(s)\, ds\Bigr)
\end{equation}
for the initial value problem (\ref{IVP1}). Indeed, the reason for
this exponential solution is simply encoded in the integration by
parts rule for $I(A)(t):=\int_0^tA(s)\,ds$:
\begin{equation*}
    \bigl(I(A)(t)\bigr)^n = n!\underbrace{I\Bigl(A\, I\bigl(A \cdots I(A)\cdots \bigr)\Bigr)}_{n\mbox{\rm -times}}(t).
\end{equation*}

Fer's~\cite{Fer} approach to solve the classical initial value
problem~(\ref{IVP1}), which was rediscovered by
Iserles~\cite{Iserles84} and further explored by
Zanna~\cite{Zanna}, and Munthe-Kaas and Zanna~\cite{MZ},
see~\cite{IsNo99} for more details, is somewhat different. His
Ansatz is:
$$
    Y=\exp\Bigl( \int_0^t A(s)\, ds \Bigr) V(t),
$$
which leads to the following differential equation for the
--correction-- operator $V(t)$:
$$
    \dot{V}(t)=\Bigl( \sum_{k>0}\frac{(-1)^k k}{(k+1)!} ad^{(k)}_{\int_0^t A(s)\, ds}[A(t)]
    \Bigr)V(t),\quad V(0)=1.
$$
Hence, an iteration of this method leads to the {\sl{Fer
expansion}}:
\begin{equation}
\label{FerExpProd}
    Y(t)=\exp\Bigl(\int_0^tU'_0(s)\,ds\Bigr)
         \exp\Bigl(\int_0^tU'_1(s)\,ds\Bigr)
         \exp\Bigl(\int_0^tU'_2(s)\,ds\Bigr)
         \cdots
         \exp\Bigl(\int_0^tU'_n(s)\,ds\Bigr)
         \cdots,
\end{equation}
where we use {\sl{Fer's recursion}}:
\begin{equation}
\label{FerRecu}
    U'_{m+1}(t):= \Bigl( \sum_{k>0}\frac{(-1)^k k}{(k+1)!} ad^{(k)}_{\int_0^t U'_m(s)\, ds}[U'_m(t)]
    \Bigr),\quad U'_0(t):=A(t).
\end{equation}

Taking an algebro-combinatorial perspective on these methods, we
should underline at this point that we completely skip the
analytical and numerical aspects of these expansions, which are
beyond doubt of crucial importance in applications. For this
purpose and related aspects we refer the interested reader to the
aforementioned references, e.g.
see~\cite{BCOR98,Iserles00,OteoRos}. However, at the end of this
work we report on an observation which may be of interest in this
context and which we plan to further explore in the near future.

\medskip

Baxter~\cite{Baxter} considers (\ref{IVP1}) in a commutative
setting, i.e. for continuous scalar functions $a:=\Psi$ and
$y:=\Phi$ depending on $t$. However, his starting point is the
corresponding integral equation:
\begin{equation}
\label{IVP2}
    y(t) = 1 + \int_0^t a(s)y(s)\, ds,
\end{equation}
and its exponential solution. Slightly deviating from Baxter's
original approach we generalize (\ref{IVP2}) to a formal power
series ring, $W[[\lambda]]$, where $W$ is a commutative unital
algebra over a field $k$ with a $k$-linear map $R: W \to W$
replacing the integral map:
\begin{equation}
\label{IVP3}
    Y = 1 + \lambda R(aY),
\end{equation}
$a \in W$ fixed. Here, $1$ is the unit in the algebra $W$. The map $R$ is
supposed to satisfy the relation:
\begin{equation}
\label{RBR}
    R(x)R(y) = R\bigl( R(x)y+xR(y) + \theta xy \bigr),
\end{equation}
where the parameter $\theta$ is a fixed scalar in $k$, called the
weight of $R$. One may think of (\ref{RBR}) as a generalized
integration by parts identity. Indeed, the reader will have no
difficulty in checking duality of~(\ref{RBR}) with the
`skewderivation' rule:
\begin{equation*}
    \partial(fg) = \partial(f)g + f\partial(g) + \theta \partial(f)\partial(g).
\end{equation*}
For example, the finite difference operator of step~$-\theta$,
given by $\partial f(x):=\theta^{-1}(f(x-\theta)-f(x))$, is a
skewderivation. On a suitable class of
functions, the summation operator:
\begin{equation}
\label{summation}
	S(f)(x) := \sum_{n\geq 1} \theta f(x + \theta n).
\end{equation}
satisfies relation (\ref{RBR}). Moreover:
\allowdisplaybreaks{
\begin{eqnarray*}
S\partial(f)(x) &=& \sum_{n\geq 1} \theta\partial(f)(x + \theta n) = \sum_{n\geq
1}\theta\,\frac{f(x +\theta n - \theta) - f(x + \theta n)}{\theta}
\\
&=& \sum_{n\geq 1} f\bigl(x +\theta (n - 1)\bigr) - f(x + \theta
n) = \sum_{n\geq 0} f(x +\theta n) - \sum_{n\geq 1}f(x + \theta n) = f(x).
\end{eqnarray*}}
And as the operator $\partial$ is linear we find as well $\partial S(f)=f$.  
Obviously, the skewderivation rule reduces to
Leibniz' rule for $\theta=0$, e.g. see~\cite{Kalliope} for more
details.

The exponential solution of equation (\ref{IVP3}) in $W[[\lambda]]$ can be seen as a
natural generalization of the classical exponential
solution~(\ref{clexpSol}) taking into account the weighted term in
identity~(\ref{RBR}):
\begin{equation}
\label{Spitzer}
    Y = 1 + \sum_{n>0} \lambda^n \underbrace{R\Bigl(a R\bigl(a \cdots R(a)}_{n-times} \bigl) \cdots  \Bigr)
      = \exp\Bigl(R\bigl(\frac{ \log(1 + \theta a \lambda)}{\theta}\bigr)\Bigr).
\end{equation}
The second equality is generally known as {\sl{Spitzer's classical
identity}}, see \cite{atkinson,Baxter,egm,Rota}. In fact, expanding 
the logarithm and the exponential on the right hand side, it
follows by comparing order by order in the parameter $\lambda$ the infinite 
set of identities in $W[[\lambda]]$.

Remember that the Riemann integral map $I:=\int_0^t$ satisfies
identity~(\ref{RBR}) for the weight $\theta=0$ (integration by
parts). In this particular case, observe that:
$$
	\theta^{-1} \log(1  + \theta a \lambda )= - \sum_{n>0} \frac{(-\theta)^{n-1}}{n}(a\lambda)^n \xrightarrow{\theta\downarrow 0}a \lambda.
$$ 
Hence, the exponential on the righthand side of identity~(\ref{Spitzer}) reduces to the classical
exponential solution~(\ref{clexpSol}). 

Having Magnus' work in mind it seems natural to ask for a non-commutative 
version of Spitzer's classical identity. In fact,  (\ref{Spitzer}) is only 
true when the underlying algebra is commutative, i.e. when (\ref{RBR}) implies:  
$$
	R(a)^2 = 2R(aR(a)) + \theta R(a^2).
$$
Its generalization to arbitrary weight $\theta$ 
{\sl{Rota--Baxter algebras}}, i.e. non-commutative algebras with a map 
$R$ satisfying relation (\ref{RBR}) can be found in~\cite{egm}. Similarly 
to Magnus' expansion~(\ref{MagnusOmega}), it relies on a particular
non-linear recursion $\chi_\theta$, which we call weight $\theta$
BCH-recursion as it is based on the Baker--Campbell--Hausdorff
formula:
\begin{equation}
\label{BCHchi}
    \chi_\theta(a) = a +
    \frac{1}{\theta}\mop{BCH}\bigl(\theta a,\, R\circ
    \chi_\theta(a)\bigr).
\end{equation}
We then find the non-commutative Spitzer identity: 
\begin{equation}
\label{SpitzerNC}
    Y = 1 + \sum_{n>0} \lambda^n \underbrace{R\Bigl(a R\bigl(a \cdots R(a)}_{n-times} \bigl) \cdots  \Bigr)
      = \exp\biggl(R\Bigl( \chi_{\theta}\bigl( \frac{ \log(1 + \theta a \lambda)}{\theta}\big)\Bigr)\biggr).
\end{equation}
We refer the reader to Section~\ref{sect:appl} for more details. One may also consult~\cite{egm,Kalliope}.

At first sight, the $\theta$ BCH- and Magnus recursion,
(\ref{BCHchi}) respectively (\ref{MagnusOmega}), look different,
but it is the goal of this work to show how they are related.
Indeed, in~\cite{egm} it was already shown that (\ref{BCHchi})
reduces to Magnus' formula (\ref{MagnusOmega}) in the
corresponding limit $\theta \to 0$. Here, on the contrary, we will
show how to get (\ref{BCHchi}) from the limit case $\theta=0$.
After this has been achieved we provide a refined picture of Fer's
expansion~(\ref{FerRecu}) in purely algebraic terms leading to an
infinite product expansion of the solution to (\ref{IVP3}). These
results are achieved in the context of the generalized integration
by parts rule~(\ref{RBR}). We emphasize the use of Loday's
dendriform algebras~\cite{Loday} which appears to be well-suited.

Recall that a dendriform algebra is an --associative-- algebra
with two non-associative operations, written $\prec$ and $\succ$
satisfying three rules. The two products add to form the product
of the algebra. At the same time they define a left and right
pre-Lie product on the same algebra. J.-L. Loday recently
introduced this notion in connection with dialgebra
structures~\cite{Loday,LodayRonco}. The commutative version of dendriform algebra is called 
``dual Leibniz algebra'' or ``Zinbiel algebra'' by Loday \cite{Loday2} (see also~\cite{Sch58}). 
Temporarily, this algebra respectively its product had been called ``chronological algebra'' 
respectively  ``chronological product'' in the context of control theory, e.g. see~\cite{Kaw00} 
for more details. The key point from our perspective is the
intimate relation between associative algebras, equipped with a
map satisfying relation (\ref{RBR}) and such dendriform algebras.
We will see that this connection renders dendriform algebras a
suitable setting to encode algebraic structures related to the
integral equations corresponding to linear differential equations.
At the end we will indicate that this approach unveils a hitherto
hidden structure in Fer's and Magnus' original work leading to a
reduction in number of commutator terms. The reader may also look
into~\cite{Kalliope} for related issues.

\medskip

This work is organized as follows. In Section~\ref{sect:dpse} we
recall the notion of dendriform algebra and introduce two
particular dendriform power sums expansions.
Section~\ref{sect:preLieMag} contains the first main result of
this work, that is, we present the central object, a new pre-Lie
Magnus type recursion, and show that its exponentiation solves the
aforementioned pair of dendriform power sums expansions. Then, in
Section~\ref{sect:preLieFer}, we introduce the pre-Lie Fer
recursion as the second main result, leading to an infinite
product expansion of exponentials for these solutions. Finally, in
Section~\ref{sect:appl} we provide some applications. We finish
the article with an observation indicating hitherto overlooked new
structural properties of the classical Magnus and Fer expansions
due to the extra pre-Lie relation, which leads to a reduction in
the number of terms in these expansions.

%%%%%%%%%%%%%%%%%%%%%%%%%%%%%%%%%%%%%%%%%%%%%%%%%%%%%%%%%%%%%%%%%%%

\section{Dendriform power sums expansions}
\label{sect:dpse}

Let $k$ be a field of characteristic zero. Recall that a {\sl
dendriform algebra\/}~\cite{Loday} over $k$ is a $k$-vector
space $A$ endowed with two bilinear operations $\prec$ and $\succ$
subject to the three axioms below:
\begin{eqnarray}
 (a\prec b)\prec c &=& a\prec(b*c)        \label{A1}\\
 (a\succ b)\prec c &=& a\succ(b\prec c)   \label{A2}\\
  a\succ(b\succ c) &=& (a*b)\succ c        \label{A3}.
\end{eqnarray}
In the commutative case, the left and right operations are
further required to identify, so that $x \succ y = y \prec x$. 
One can show that these relations yield associativity for the
product 
\begin{equation}
\label{dend-assoc}
	a * b := a \prec b + a \succ b.  
\end{equation}

\begin{exam}\label{ex:RiemannDend}{\rm{
As a guiding example we regard an algebra $F$ of operator-valued functions on 
the real line, closed under integrals $\int_0^x$, say, smooth $n \times n$ matrix-valued
functions. Then, $D_F=(F,\prec,\succ)$ is a dendriform algebra for the operations:
$$
    (A \prec B) (x) := A(x)\cdot \int\limits_0^x B(y)\, dy
 \qquad
    (A \succ B) (x) := \int\limits_0^x A(y)\, dy \cdot B(x)
$$
with $A,B \in F$. One verifies the dendriform axioms using the integration by parts rule. 
For instance, in this setting the dendriform relation (\ref{A1}) means:
$$
\Big(A(x)\cdot \int\limits_0^x B(u)\, du \Big)\cdot \int\limits_0^x C(v)\, dv 
= A(x)\cdot \int\limits_0^x  \biggl( B(u)\cdot \int\limits_0^u C(v)\, dv\ +\ \int\limits_0^u B(v)\, dv \cdot C(u)\biggr)\, du
$$ 
for $A,B,C \in F$. The associative product in this dendriform algebra then writes:
$$
    (A * B)(x) := A(x)\cdot \int\limits_0^x B(y)\, dy\ +\ \int\limits_0^x A(y)\, dy \cdot B(x).
$$
Let us remark that a commutative algebra $(F,\int_0^x)$ naturally provides a commutative dendriform algebra. 
}}
\end{exam}

Let us return to the dendriform axioms. One shows that at the same time the dendriform relations 
imply that the bilinear operations
$\rhd$ and $\lhd$ defined by:
\begin{equation}
\label{def:prelie}
    a \rhd b:= a\succ b-b\prec a,
    \hskip 12mm
    a \lhd b:= a\prec b-b\succ a
\end{equation}
are {\sl{left pre-Lie}} and {\sl{right pre-Lie}}, respectively,
which means that we have:
\begin{eqnarray}
    (a\rhd b)\rhd c-a\rhd(b\rhd c)&=& (b\rhd a)\rhd c-b\rhd(a\rhd c),\label{prelie1}\\
    (a\lhd b)\lhd c-a\lhd(b\lhd c)&=& (a\lhd c)\lhd b-a\lhd(c\lhd b).\label{prelie2}
\end{eqnarray}

In the setting of our guiding Example \ref{ex:RiemannDend} these pre-Lie products write:
$$
    (A \lhd B)(x) := A(x) \cdot \int\limits_0^x B(y)\, dy  \ -\  \int\limits_0^x B(y)\, dy \cdot A(x),
$$
$$
    (A \rhd B)(x) := \int\limits_0^x A(y)\, dy \cdot B(x) \ - \ B(x) \cdot \int\limits_0^x A(y)\, dy.
$$

The associative operation $*$ and the pre-Lie operations $\rhd$,
$\lhd$ all define the same Lie bracket:
\begin{equation}
    [a,b]:=a*b-b*a=a\rhd b-b\rhd a=a\lhd b-b\lhd a.
\end{equation}

Loday and Ronco introduced in~\cite{LodayRonco} the notion of
{\textsl{tridendriform algebra}} $T$ equipped with three operations, $<, >$
and $\bullet$, satisfying seven dendriform type axioms:
 \allowdisplaybreaks{
\begin{eqnarray}
    && (x < y) < z = x < (y \star z),   \hskip 7mm
       (x > y) < z = x > (y < z),       \hskip 7mm
       (x \star y) > z = x > (y > z),   \\
    (x > y) \bullet z \!\!\!&=&\!\!\! x > (y \bullet z),   \hskip 3mm
       (x < y) \bullet z = x \bullet (y> z),    \hskip 3mm
       (x \bullet y) < z = x \bullet (y < z),   \hskip 3mm
       (x \bullet y) \bullet z = x \bullet (y \bullet z), \notag
       \label{DT}
\end{eqnarray}}
yielding an associative product $x \star y := x < y + x > y + x
\bullet y$. First, observe that the category of dendriform
algebras can be identified with the subcategory of objects in
the category of tridendriform algebras with $\bullet=0$. Moreover,
one readily verifies for a tridendriform algebra $(T,<,>,\bullet)$
that $(D_T,\prec_\bullet,\succ)$, where $\prec_\bullet :=\ < +\
\bullet$ and $\succ:=>$, is a dendriform algebra~\cite {E}.

\begin{exam}\label{ex:SumDend}{\rm{ The summation operation 
(\ref{summation}) for $\theta=1$ on a suitable algebra $F$ of functions provides a natural 
example for such a tridendriform algebra $T_F=(F,<,>,\bullet)$:
$$
	(A < B)(x):= 	A(x) \cdot S(B)(x),
	\;\
	(A > B)(x):=  S(A)(x) \cdot B(x),
	\;\
	(A \bullet B)(x):= A(x) \cdot B(x).
$$
One verifies without problems the tridendriform relations. For instance, the first relation simply encodes:
$$
	\Big(A(x) \cdot S(B)(x)\Big) \cdot S(C)(x) = A(x) \cdot \Big( S\big(B\cdot S(C)\big)(x) 
											+ S\big(S(B)\cdot C\big)(x) 
											+ S(B \cdot C)(x)\Big)
$$
identity (\ref{RBR}). The associative product in this tridendriform algebra then writes:
$$
    (A \star B)(x) = A(x)\cdot S(B)(x)\ +\  S(A)(x) \cdot B(x) + A(x) \cdot B(x).
$$
Within this example the reader may want to convince himself that 
$D_{T_F}=(F,\prec_\bullet,\succ)$, where $\prec_\bullet :=\ < +\ \bullet$, i.e.:
$$
	(A \prec_\bullet B)(x)= (A < B)(x) + (A \bullet B)(x)
$$  
and $\succ:=>$, is a dendriform algebra. 
}}
\end{exam}

One feels that dendriform algebras provide an elegant setting for a refined encoding of fundamental structures
underlying integration and summation operations. In fact, further below we will show that this is true for a much 
larger class of operators, i.e. characterized by identity (\ref{RBR}).

\medskip

Let $\overline A = A \oplus k.\un$ be our dendriform algebra
augmented by a unit $\un$:
\begin{equation}
\label{unit-dend}
    a \prec \un := a =: \un \succ a
    \hskip 12mm
    \un \prec a := 0 =: a \succ \un,
\end{equation}
implying $a*\un=\un*a=a$. Note that $\un*\un=\un$, but that $\un
\prec \un$ and $\un \succ \un$ are not defined~\cite{R}, \cite{C}.
We recursively define the following set of elements of $\overline
A[[\lambda]]$ for a fixed $a \in A$:
 \allowdisplaybreaks{
\begin{eqnarray*}
    w^{(0)}_{\prec}(a) := \un =:w^{(0)}_{\succ}(a),\ \;
    w^{(n)}_{\prec}(a) := a \prec \bigl(w^{(n-1)}_\prec(a)\bigr),\ \;
    w^{(n)}_{\succ}(a) := \bigl(w^{(n-1)}_\succ(a)\bigr)\succ a.
\end{eqnarray*}}

Let us define the exponential and logarithm map in terms of the
associative product (\ref{dend-assoc}), $\exp^*(x):=\sum_{n \geq 0} x^{*n}/n!$,
$\log^*(\un+x):=-\sum_{n>0}(-1)^nx^{*n}/n$, respectively. In the
following we first give a recursive expression for the logarithm
of the solutions of the following two equations for a fixed $a \in
A$:
\begin{equation}
\label{eq:prelie}
     X = \un + \lambda a \prec X,
    \hskip 12mm
     Y = \un - Y \succ \lambda a.
\end{equation}
in $\overline A[[\lambda ]]$, in terms of the left pre-Lie product
$\rhd$. This will in particular encompass the Magnus expansion for
the logarithm of a solution of a linear first-order homogeneous
differential equation in a noncommutative algebra~\cite{Magnus}.
Then we present the solutions of the two equations as an infinite
product expansion of the exponential, which encompasses Fer's
solution to a linear first-order homogeneous differential
equation~\cite{Fer}.

\begin{rmk}
{\rm{ For later use we may also write:
$$
    (X-\un) = \lambda a + \lambda a \prec (X-\un)
    \hskip 15mm {\rm{resp.}} \hskip 15mm
    (Y-\un) = - \lambda a - (Y-\un) \succ \lambda a.
$$
}}
\end{rmk}

%%%%%%%%%%%%%%%%%%%%%%%%%%%%%%%%%%%%%%%%%%%%%%%%%%%%%%%%%%%%%%%%%%%

\section{The pre-Lie Magnus expansion}
\label{sect:preLieMag}

Formal solutions to (\ref{eq:prelie}) are given by:
\begin{equation*}
    X = \sum_{n \geq 0} \lambda^nw^{(n)}_{\prec}(a)
    \hskip 15mm {\rm{resp.}} \hskip 15mm
    Y = \sum_{n \geq 0} (-\lambda)^nw^{(n)}_{\succ}(a).
\end{equation*}
Let us introduce the following operators in $(A,\prec,\succ)$,
where $a$ is any element of $A$:
 \allowdisplaybreaks{
\begin{eqnarray*}
    L_\prec[a](b)&:=& a \prec b \hskip 8mm L_\succ[a](b):= a\succ b \hskip 8mm
    R_\prec[a](b):= b \prec a \hskip 8mm R_\succ[a](b):= b\succ a\\
    L_\lhd[a](b)&:=& a \lhd b \hskip 8mm L_\rhd[a](b) := a \rhd b \hskip 8mm
    R_\lhd[a](b):=b \lhd a \hskip 8mm R_\rhd[a](b):= b \rhd a
\end{eqnarray*}}

\begin{thm} \label{thm:main}
Let $\Omega':=\Omega'(\lambda a)$, $a \in A$, be the element of
$\lambda \overline{A}[[\lambda]]$ such that $X=\exp^*(\Omega')$
and $Y=\exp^*(-\Omega')$, where $X$ and $Y$ are the solutions of
the two equations (\ref{eq:prelie}), respectively. This element
obeys the following recursive equation:
 \allowdisplaybreaks{
\begin{eqnarray}
    \Omega'(\lambda a) &=& \frac{R_\lhd[\Omega']}{1-\exp(-R_\lhd[\Omega'])}(\lambda a)=
                \sum_{m \ge 0}(-1)^m \frac{B_m}{m!}R_\lhd[\Omega']^m(\lambda a),\label{main1}
%\\
%    \Omega'(\lambda a)&=& \frac{L_\rhd[\Omega']}{1-\exp(-L_\rhd[\Omega'])}(\lambda a)=
%                \sum_{n\ge 0}(-1)^n\frac{B_n}{n!}L_\rhd[\Omega']^n(\lambda a),\label{main2}
\end{eqnarray}}
or alternatively:
 \allowdisplaybreaks{
\begin{eqnarray}
    \Omega'(\lambda a) &=& \frac{L_\rhd[\Omega']}{\exp(L_\rhd[\Omega'])-1}(\lambda a)
            =\sum_{m\ge 0}\frac{B_m}{m!}L_\rhd[\Omega']^m(\lambda a),\label{main3}
%\\
%    \Omega'(\lambda a)&=& \frac{R_\lhd[\Omega']}{\exp(R_\lhd[\Omega'])-1}(\lambda a)
%            =\sum_{n\ge 0}\frac{B_n}{n!}R_\lhd[\Omega']^n(\lambda a),\label{main4}
\end{eqnarray}}
where the $B_l$'s are the Bernoulli numbers.
\end{thm}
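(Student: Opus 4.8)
The plan is to distill a single dendriform \emph{master identity}, from which both (\ref{main1}) and (\ref{main3}) follow after inverting a Bernoulli generating series. First I would set up notation: from $X=\un+\lambda a\prec X$ one has $w^{(n)}_\prec(a)\in A$ for $n\ge 1$, hence $X-\un\in\lambda A[[\lambda]]$ and $\Omega':=\log^*(X)\in\lambda A[[\lambda]]$ carries \emph{no} unit component (so that $\Omega'\succ\un=0$ is unambiguous). The equation itself reads $\lambda a\prec\exp^*(\Omega')=\exp^*(\Omega')-\un$. The whole statement will reduce to showing that $\lambda a$ is forced to equal a specific pre-Lie series in $\Omega'$: since $u\mapsto u\prec X=u+u\prec(X-\un)=\bigl(\id+R_\prec[X-\un]\bigr)(u)$ and $R_\prec[X-\un]$ raises the $\lambda$-degree, this map is injective on $\lambda\overline A[[\lambda]]$, so any two preimages of $X-\un$ coincide.

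The heart is the following claim, which I would prove for every $\omega\in\lambda A[[\lambda]]$:
$$\bigl(g(L_\rhd[\omega])(\omega)\bigr)\prec\exp^*(\omega)=\exp^*(\omega)-\un,\qquad g(z)=\frac{e^z-1}{z}.$$
To establish it I introduce a scaling parameter $s$, set $E_s:=\exp^*(s\omega)$ and $g_s:=\sum_{m\ge 0}\tfrac{s^{m+1}}{(m+1)!}L_\rhd[\omega]^m(\omega)$, so that $g_1=g(L_\rhd[\omega])(\omega)$ and $g_s$ obeys the transport equation $\tfrac{d}{ds}g_s=\omega+\omega\rhd g_s$. Differentiating $\Psi(s):=g_s\prec E_s$, using $\tfrac{d}{ds}E_s=\omega*E_s$, I expand $\omega\rhd g_s=\omega\succ g_s-g_s\prec\omega$ and apply axiom (\ref{A2}) to rewrite $(\omega\succ g_s)\prec E_s=\omega\succ(g_s\prec E_s)$ and axiom (\ref{A1}) to rewrite $(g_s\prec\omega)\prec E_s=g_s\prec(\omega*E_s)$; the latter cancels the term $g_s\prec(\omega*E_s)$ coming from $g_s\prec E_s'$. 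This collapses the derivative to $\Psi'(s)=\omega\prec E_s+\omega\succ\Psi(s)$. Comparing with $\Phi(s):=E_s-\un$, whose derivative is $\omega*E_s=\omega\prec E_s+\omega\succ E_s$, the difference $\Delta:=\Psi-\Phi$ satisfies $\Delta'(s)=\omega\succ\Delta(s)$ with $\Delta(0)=0$ (here I use $\omega\succ\un=0$). A linear homogeneous equation with zero initial datum forces $\Delta\equiv 0$, and evaluating at $s=1$ gives the identity.

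Applying the master identity at $\omega=\Omega'$ yields $\bigl(g(L_\rhd[\Omega'])(\Omega')\bigr)\prec X=X-\un=\lambda a\prec X$, and the injectivity above forces $\lambda a=g(L_\rhd[\Omega'])(\Omega')$. Since $\Omega'\in\lambda A[[\lambda]]$ the operator $L_\rhd[\Omega']$ is $\lambda$-adically degree-raising, so the functional calculus is well defined; as $g(0)=1$, the reciprocal series $1/g(z)=z/(e^z-1)=\sum_m\tfrac{B_m}{m!}z^m$ gives a two-sided inverse of $g(L_\rhd[\Omega'])$, and applying it produces exactly (\ref{main3}). For (\ref{main1}) I note the operator identity $R_\lhd[\Omega']=-L_\rhd[\Omega']$, which is immediate from $a\rhd b=a\succ b-b\prec a$ and $a\lhd b=a\prec b-b\succ a$; under $z=L_\rhd=-R_\lhd$ the series $z/(e^z-1)$ becomes $R_\lhd/(1-e^{-R_\lhd})=\sum_m(-1)^m\tfrac{B_m}{m!}R_\lhd^m$, so (\ref{main1}) and (\ref{main3}) coincide term by term.

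Finally, to obtain that the \emph{same} $\Omega'$ gives $Y=\exp^*(-\Omega')$, I would run the mirror argument in the opposite dendriform algebra $x\prec^{op}y:=y\succ x$, $x\succ^{op}y:=y\prec x$ (dendriform with the same augmentation, and $\exp^{*^{op}}=\exp^*$ on a single element), in which $L_{\rhd^{op}}[\omega]=R_\lhd[\omega]$. The master identity there translates into $\exp^*(-\Omega')\succ\lambda a=\un-\exp^*(-\Omega')$, i.e. $\exp^*(-\Omega')$ solves $Y=\un-Y\succ\lambda a$, whence $Y=\exp^*(-\Omega')$ by uniqueness of the solution. The main obstacle is the master identity of the second paragraph: everything else is bookkeeping or the trivial sign $R_\lhd=-L_\rhd$, whereas there the only non-mechanical point is choosing the correct $s$-deformation $g_s$ with its transport equation so that the two structure constants cancel through (\ref{A1})--(\ref{A2}) and the residue degenerates to a linear ODE. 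It is precisely here that the unit convention $\omega\succ\un=0$ is needed, which is why it matters that $\Omega'$ has no unit component.
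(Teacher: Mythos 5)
Your proposal is correct, and its overall skeleton matches the paper's: both arguments reduce the theorem to the single identity $\lambda a=\frac{\exp(L_\rhd[\Omega'])-1}{L_\rhd[\Omega']}(\Omega')$ (this is the paper's (\ref{main5}) after the sign flip $R_\lhd[\Omega']=-L_\rhd[\Omega']$), and then invert the Bernoulli generating series. But you execute the two key steps by genuinely different means. The paper proves the identity by explicit computation: it splits $R_\lhd[\Omega']=R_\prec[\Omega']-L_\succ[\Omega']$ into commuting operators (axiom (\ref{A2})), rewrites $\frac{1-\exp(-R_\lhd[\Omega'])}{R_\lhd[\Omega']}(\Omega')$ as the conjugation integral $\int_0^1\exp^*(s\Omega')\succ\Omega'\prec\exp^*(-s\Omega')\,ds$, then applies $\prec X$, uses axiom (\ref{A1}), the beta integral $\int_0^1(1-s)^qs^p\,ds=p!q!/(p+q+1)!$ and the induction identity $\sum_{p+q=n}\Omega'^{*p}\succ\Omega'\prec\Omega'^{*q}=\Omega'^{*n+1}$; the $Y$-equation is then checked by a second computation of the same type. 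You instead prove your master identity by a transport--uniqueness argument: $g_s\prec E_s$ and $E_s-\un$ satisfy the same first-order linear equation in the formal parameter $s$ with the same initial condition, so they coincide; and you dispose of $Y$ structurally by passing to the opposite dendriform algebra, where $L_{\rhd^{op}}[\omega]=R_\lhd[\omega]$, rather than redoing the computation. Your route also makes the logical direction explicit: the paper verifies that a solution of the recursion exponentiates to the solution $X$ and leaves the uniqueness bookkeeping implicit, whereas you go directly from $X$ to the recursion via the injectivity of $u\mapsto u\prec X$. What the paper's computation buys is the explicit conjugation representation of $\lambda a$, which is reused verbatim (equation (\ref{eq:important})) in the proof of Lemma~\ref{keytoFer} for the Fer expansion; your argument is leaner but does not produce that intermediate formula. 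The only points you leave tacit are routine: evaluation at $s=1$ is legitimate because $\omega\in\lambda A[[\lambda]]$ makes each coefficient of $\lambda^n$ polynomial in $s$, and the opposite structure $(\prec^{op},\succ^{op})$ does indeed satisfy (\ref{A1})--(\ref{A3}).
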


\begin{proof}
Let us notice that (\ref{main3}) can be immediately derived from
(\ref{main1}) thanks to $L_\rhd[b]=-R_\lhd[b]$ for any $b\in A$.
We prove (\ref{main1}), which can be rewritten as:
\begin{equation}
    \lambda a = \frac{1 - \exp(-R_\lhd[\Omega'])}{R_\lhd[\Omega']}(\Omega'(\lambda a)).
\label{main5}
\end{equation}
Given such $\Omega':=\Omega'(\lambda a) \in \lambda
\overline{A}[[\lambda]]$ we must then prove that
$X:=\exp^*(\Omega'(\lambda a))$ is the solution of $X = \un +
\lambda a \prec X$, where $a$ is given by (\ref{main5}). Let us
first remark that:
\begin{equation}
    R_\lhd[\Omega'] = R_\prec[\Omega'] - L_\succ[\Omega'],
\end{equation}
and that the two operators $R_\prec[\Omega']$ and
$L_\succ[\Omega']$ commute thanks to the dendriform axiom
(\ref{A2}). We have then, using the three dendriform algebra
axioms:
 \allowdisplaybreaks{
\begin{eqnarray*}
    \lambda a =  \frac{1-\exp(-R_\lhd[\Omega'])}{R_\lhd[\Omega']}(\Omega')
      &=& \int_{0}^1 \exp(-sR_\lhd[\Omega'])(\Omega')\,ds\\
      &=& \int_{0}^1 \exp(sL_\succ[\Omega'])\exp(-sR_\prec[\Omega'])(\Omega')\,ds\\
      &=& \int_{0}^1 \exp^*(s\Omega') \succ \Omega' \prec \exp^*(-s\Omega')\, ds.
\end{eqnarray*}}
So we get:
 \allowdisplaybreaks{
\begin{eqnarray}
    \lambda a \prec X &=& \int_0^1 \bigl(\exp^*(s\Omega') \succ \Omega' \prec \exp^*(-s\Omega') \bigr)\prec \exp^*(\Omega')\, ds\nonumber\\
               &=& \int_0^1 \exp^*(s\Omega') \succ \Omega' \prec \exp^*((1-s)\Omega')\, ds\label{eq:important}\\
               &=& \sum_{n\ge 0} \sum_{p+q=n} \Omega'^{*p} \succ \Omega' \prec \Omega'^{*q}
                                              \int_0^1 \frac{(1-s)^q s^p}{p!q!}\, ds.\nonumber
\end{eqnarray}}
An iterated integration by parts shows that:
\begin{equation*}
    \int_0^1 (1-s)^q s^p\,ds = \frac{p!q!}{(p+q+1)!},
\end{equation*}
which yields:
\begin{equation*}
    \lambda a \prec X = \sum_{n\ge 0}\frac{1}{(n+1)!}\sum_{p+q=n}\Omega'^{*p}\succ \Omega' \prec \Omega'^{*q}.
\end{equation*}
On the other hand, we have:
\begin{equation}
    X-\un=\exp^*(\Omega')-\un =\sum_{n\ge 0} \frac{1}{(n+1)!} \Omega'^{*n+1}.
\end{equation}
Equality (\ref{main5}) follows then from the identity:
\begin{equation*}
    \sum_{p+q=n}\Omega'^{*p} \succ \Omega' \prec \Omega'^{*q} =\Omega'^{*n+1}
\end{equation*}
which is easily shown by induction on $n$. Analogously, one
readily verifies that:
 \allowdisplaybreaks{
\begin{eqnarray*}
    Y \succ \lambda a &=& \int_0^1  \exp^*(-\Omega') \succ \bigl(\exp^*(s\Omega') \succ \Omega' \prec \exp^*(-s\Omega') \bigr)\, ds\\
               &=& \int_0^1 \exp^*((s-1)\Omega') \succ \Omega' \prec \exp^*(-s\Omega')\, ds\\
               &=& \sum_{n\ge 0} \sum_{p+q=n} (-1)^{(p+q)}\Omega'^{*p} \succ \Omega' \prec \Omega'^{*q}
                                              \int_0^1 \frac{(1-s)^q s^p}{p!q!}\, ds
                = \sum_{n\ge 0}\frac{(-1)^{(p+q)}}{(n+1)!}\sum_{p+q=n}\Omega'^{*p}\succ \Omega' \prec \Omega'^{*q}.
\end{eqnarray*}}

\end{proof}

\begin{rmk}\label{OmegaPrim}{\rm{
It seems appropriate at this point to justify notation. We have
chosen to write $\Omega'$ to remind the reader of the fact that in
the particular context of Example~\ref{ex:RiemannDend} (see
subsection~\ref{subsect:clMagnus} below) one readily sees that
$\Omega'(t) = \dot{\Omega}(t)$, see eq.~(\ref{MagnusOmega}).}}
\end{rmk}

\begin{rmk}\label{OmegaDend}{\rm{In~\cite{EMP07b} we were able to show, using
Hopf algebra and free Lie algebra techniques, i.e. the Dynkin
idempotent map that:
\begin{equation*}
    \Omega'(a) = \int_0^1 \biggl(\mathcal{L}(s) +
    \sum_{n>0} (-1)^n\frac{B_n}{n!} ad^{(n)}_{\Omega'}
    (\mathcal{L}(s))\biggr)ds,
\end{equation*}
where $\mathcal{L}(t)=\sum_{n>0}R_\rhd[a]^{(n)} (a)t^{n-1}$. }}
\end{rmk}

%%%%%%%%%%%%%%%%%%%%%%%%%%%%%%%%%%%%%%%%%%%%%%%%%%%%%%%%%%%%%%%%%%%

\section{The pre-Lie Fer expansion}
\label{sect:preLieFer}

Let us come back to the dendriform power sums
expansions~(\ref{eq:prelie}):
$$
    X = \un + \lambda a \prec X
    \qquad\ \ \rm{ and }\ \qquad\
    Y= \un - \lambda Y \succ a.
$$
We will mainly focus on the first one. Following Fer's original
work~\cite{Fer} we now make a simple Ansatz for its solution:
$$
    X=\exp^*(\lambda a)*V_1
$$
and return this into the recursion. Recall the dendriform
axiom~(\ref{A1}). This then leads to the following:
 \allowdisplaybreaks{
\begin{eqnarray*}
    \exp^*(\lambda a)*V_1 &=& \un + \lambda a \prec \bigl(\exp^*( \lambda a) * V_1\bigr)
                           =  \un + \lambda \bigl(a \prec \exp^*(\lambda a)\bigr) \prec V_1.
\end{eqnarray*}}
It is the goal to derive a recursion for $V_1$ similar to the
original one and then to iterate the process.
Remember~(\ref{unit-dend}) and the dendriform axiom~(\ref{A2}):
 \allowdisplaybreaks{
\begin{eqnarray}
    V_1 &=& \exp^*(-\lambda a) + \lambda \exp^*(-\lambda a) * \Bigl( \bigl(a \prec \exp^*(\lambda a)\bigr) \prec V_1\Bigr) \nonumber\\
        &=& \exp^*(-\lambda a) + \lambda \exp^*(-\lambda a) \prec \Bigl( \bigl(a \prec \exp^*(\lambda a)\bigr) \prec V_1\Bigr) +
                                 \lambda \exp^*(-\lambda a) \succ \Bigl( \bigl(a \prec \exp^*(\lambda a)\bigr) \prec V_1\Bigr) \nonumber\\
        &=& \exp^*(-\lambda a) + \lambda \exp^*(-\lambda a) \prec \Bigl( \bigl(a \prec \exp^*(\lambda a)\bigr) \prec V_1\Bigr) +
                  \bigl(\exp^*(-\lambda a) \succ \lambda a \prec \exp^*(\lambda a)\bigr) \prec V_1 \nonumber\\
        &=& \un + \bigl(\exp^*(-\lambda a)-\un \bigr) \prec
                  \Bigl(\un + \lambda  a \prec \bigl( \exp^*(\lambda a) * V_1\bigr)\Bigr) +
                  \bigl(\exp^*(-\lambda a) \succ \lambda a \prec \exp^*(\lambda a)\bigr) \prec V_1 \nonumber\\
        &=& \un + \Bigl(\bigl(\exp^*(-\lambda a)-\un \bigr) \prec \exp^*(\lambda a) \Bigr) \prec V_1 +
                  \bigl(\exp^*(-\lambda a) \succ \lambda a \prec \exp^*(\lambda a)\bigr) \prec V_1 \nonumber\\
        &=& \un + \Bigl(\bigl(\exp^*(-\lambda a)-\un \bigr) \prec \exp^*(\lambda a) +
                  \exp^*(-\lambda a) \succ \lambda a \prec \exp^*(\lambda a) \Bigr) \prec V_1. \nonumber
\end{eqnarray}}
At this point we can repeat the above using the Ansatz:
$$
    V_1:=\exp^*( U'_1 )*V_2,
$$
where:
\begin{equation}
\label{here1}
    U'_1:= \bigl(\exp^*(-\lambda a)-\un \bigr) \prec \exp^*(\lambda a) +
                 \exp^*(-\lambda a) \succ \lambda a \prec \exp^*(\lambda a).
\end{equation}
In general we have:
$$
    V_{n} := \exp^*( U'_{n} )*V_{n+1},
$$
with $U'_0:=\lambda a$ and:
\begin{equation}
\label{here3}
    U'_{n}:= \bigl(\exp^*(-U'_{n-1})-\un \bigr) \prec \exp^*(U'_{n-1}) +
                 \exp^*(-U'_{n-1}) \succ U'_{n-1} \prec \exp^*(U'_{n-1}).
\end{equation}
Such that we arrive at the following infinite product expansion
for $X$:
$$
    X = \exp^*( U'_0 ) * \exp^*( U'_1 ) * \exp^*( U'_2 ) * \cdots * \exp^*( U'_n )* \cdots.
$$
Analogously, using the Ansatz $Y = V_1*\exp^*(-\lambda a)$ one
shows that:
$$
    Y = \cdots * \exp^*(- U'_n ) * \cdots * \exp^*( -U'_2 ) * \exp^*( -U'_1 ) * \exp^*( -U'_0 ).
$$

Let us now examine a bit closer~(\ref{here1}). By the foregoing
calculation in Section~\ref{sect:preLieMag} one readily verifies
that:
$$
   \exp(R_\lhd[\lambda a])(\lambda a) = \exp^*(-\lambda a) \succ \lambda a \prec \exp^*(\lambda a).
$$

\begin{lem} \label{keytoFer} Let $(A,\prec,\succ)$ be a dendriform algebra
augmented by a unit $\un$~(\ref{unit-dend}). Then, for $a \in A$
we have:
\begin{equation}
\label{here2}
    \bigl(\exp^*(-a)-\un \bigr) \prec \exp^*(a) = -  \int_{0}^1 \exp^*(-s a) \succ a \prec \exp^*(s a)\, ds.
\end{equation}
\end{lem}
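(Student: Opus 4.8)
The plan is to reduce both sides of~(\ref{here2}) to the single commuting-operator expression $\frac{\id-\exp(R_\lhd[a])}{R_\lhd[a]}(a)$, reusing the operator calculus already developed in the proof of Theorem~\ref{thm:main}. Throughout I would abbreviate $L:=L_\succ[a]$ and $R:=R_\prec[a]$; by the dendriform axiom~(\ref{A2}) these two operators commute, and $R_\lhd[a]=R-L$. The structural facts I rely on are that $L_\succ$ is a homomorphism and $R_\prec$ an anti-homomorphism from $(\overline A,*)$ to operators (immediate from axioms~(\ref{A3}) and~(\ref{A1})), so that for every $b\in A$ one has $b\prec\exp^*(a)=\exp(R)(b)$ and $\exp^*(a)\succ b=\exp(L)(b)$, the unit causing no trouble because $b\prec\un=b=\un\succ b$.

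For the right-hand side I would first record, exactly as in the computation just preceding the lemma, that
\begin{equation*}
    \exp^*(-sa)\succ a\prec\exp^*(sa)=\exp(-sL)\,\exp(sR)(a)=\exp\bigl(s(R-L)\bigr)(a)=\exp\bigl(sR_\lhd[a]\bigr)(a),
\end{equation*}
the middle equality using the commutativity of $L$ and $R$. Integrating in $s$ and using $\int_0^1\exp(sX)\,ds=\frac{\exp(X)-\id}{X}$ then yields
\begin{equation*}
    -\int_0^1\exp^*(-sa)\succ a\prec\exp^*(sa)\,ds=-\frac{\exp(R_\lhd[a])-\id}{R_\lhd[a]}(a)=\frac{\id-\exp(R_\lhd[a])}{R_\lhd[a]}(a).
\end{equation*}

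For the left-hand side the crucial step is an operator representation of $\exp^*(-a)-\un$. Starting from the identity $\sum_{p+q=n}a^{*p}\succ a\prec a^{*q}=a^{*(n+1)}$ established in the proof of Theorem~\ref{thm:main}, each summand rewrites as $L^pR^q(a)$, so $a^{*(n+1)}=\sum_{p+q=n}L^pR^q(a)$; summing over $n$ with the signs and factorials of $\exp^*(-a)$ and invoking $\frac{X^k-Y^k}{X-Y}=\sum_{p+q=k-1}Y^pX^q$ for commuting $X,Y$ gives
\begin{equation*}
    \exp^*(-a)-\un=\frac{\exp(-R)-\exp(-L)}{R-L}(a).
\end{equation*}
Applying $R_\prec[\exp^*(a)]=\exp(R)$ and simplifying the commuting functions of $L$ and $R$ then produces
\begin{equation*}
    \bigl(\exp^*(-a)-\un\bigr)\prec\exp^*(a)=\exp(R)\,\frac{\exp(-R)-\exp(-L)}{R-L}(a)=\frac{\id-\exp(R-L)}{R-L}(a),
\end{equation*}
which is exactly $\frac{\id-\exp(R_\lhd[a])}{R_\lhd[a]}(a)$, so the two sides agree.

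I expect the only genuine obstacle to be the bookkeeping around the unit: one must keep $L$ and $R$ acting on the augmentation ideal $A$, where $\exp^*(\pm a)-\un$ actually lives, so that the undefined expressions $\un\prec\un$ and $\un\succ\un$ never arise; and one must apply the power-sum identity $a^{*(n+1)}=\sum_{p+q=n}L^pR^q(a)$ with care, since it is precisely the device that converts the nested dendriform products into ordinary commuting-operator calculus. Everything else reduces to the two elementary scalar identities quoted above, now read as identities between commuting operators.
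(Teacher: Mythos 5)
Your proof is correct and is essentially the paper's own argument recast in commuting-operator form: the paper quotes the integral representation (\ref{eq:important}) from the proof of Theorem~\ref{thm:main} (with $-a$ in place of $\Omega'$), which in operator language is exactly your identity $\exp^*(-a)-\un=\frac{\exp(-R)-\exp(-L)}{R-L}(a)$, and then applies $\prec\exp^*(a)$ together with the first dendriform axiom (\ref{A1}), which is precisely your multiplication by $\exp(R)$ via the anti-homomorphism property of $R_\prec$. The only presentational difference is that you bring both sides to the common normal form $\frac{\id-\exp(R_\lhd[a])}{R_\lhd[a]}(a)$, whereas the paper transforms the left-hand side directly into the right-hand side under the integral sign.
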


\begin{proof}
In the proof of Theorem~\ref{thm:main} we have shown (see
(\ref{eq:important}), with $-a$ replacing $\Omega'$) that:
$$
    \exp^*(-a)-\un = -  \int_{0}^1 \exp^*(-s a) \succ a \prec \exp^*((s-1) a)\, ds.
$$
This immediately yields:
 \allowdisplaybreaks{
\begin{eqnarray*}
    \bigl(\exp^*(-a) -\un \bigr) \prec \exp^*(a)
    &=& - \Big(\int_{0}^1 \exp^*(-s a) \succ a \prec \exp^*((s-1) a)\, ds \Big) \prec \exp^*(a)\\
    &=& - \int_{0}^1 \exp^*(-s a) \succ a \prec \exp^*((s) a)\, ds
\end{eqnarray*}}
by application of the first dendriform axiom.
\end{proof}

This lemma implies then for the general recursion~(\ref{here3}):
 \allowdisplaybreaks{
\begin{eqnarray}
   \lefteqn{ \bigl(\exp^*(-U'_{n})-\un \bigr) \prec \exp^*(U'_{n}) +
                   \exp^*(-U'_{n}) \succ U'_{n} \prec \exp^*(U'_{n})} \nonumber \\
    &=& \exp(R_\lhd[U'_{n}])(U'_{n})  -  \int_{0}^1 \exp^*(s U'_{n}) \succ U'_{n} \prec \exp^*(-s U'_{n})\, ds \nonumber\\
    &=& \exp(R_\lhd[U'_{n}])(U'_{n})  - \frac{\exp(R_\lhd[U'_{n}])-1}{R_\lhd[U'_{n}]}(U'_{n}) \nonumber\\
    &=& \exp(-L_\rhd[U'_{n}])(U'_{n})  + \frac{\exp(-L_\rhd[U'_{n}])-1}{L_\rhd[U'_{n}]}(U'_{n}) \nonumber\\
    &=& \frac{\bigl(L_\rhd[U'_{n}] + 1\bigr)\exp(-L_\rhd[U'_{n}]) - 1}{L_\rhd[U'_{n}]}(U'_{n})\nonumber
\end{eqnarray}}
and one shows that this then gives the nice identity essentially
encoding Fer's classical expansion in terms of a pre-Lie product:
$$
    \bigl(\exp^*(-U'_{n})-\un \bigr) \prec \exp^*(U'_{n}) +
                   \exp^*(-U'_{n}) \succ U'_{n} \prec \exp^*(U'_{n})
    = \sum_{l>0} \frac{(-1)^l l}{(l+1)!} L_\rhd[U'_n]^{(l)}(U'_n).
$$

In the following theorem we summarize the above by formulating
Fer's expansion~(\ref{FerRecu}).

\begin{thm} \label{thm:Fer}
Let $(A,\prec,\succ)$ be a dendriform algebra augmented by a
unit $\un$~(\ref{unit-dend}). Let $U'_0:=\lambda a$,
$U'_n:=U'_n(a)$, $n \in \mathbb{N}$, $a \in A$, be elements in
$\lambda \overline{A}[[\lambda]]$, such that
$$
    X = \overrightarrow{\prod\limits_{n \ge 0}}^* \exp^*(U'_n)
    \qquad\
    Y = \overleftarrow{\prod\limits_{n \ge 0}}^* \exp^*(-U'_n)
$$
where $X$ and $Y$ are the solutions of the equations
(\ref{eq:prelie}). Then these elements $U'_n$ obey the following
recursive equation:
 \allowdisplaybreaks{
\begin{eqnarray}
    U'_{n+1}:= \sum_{l>0} \frac{(-1)^l l}{(l+1)!} L_\rhd[U'_n]^{(l)}(U'_n),\quad\ n \ge 0.
\end{eqnarray}}
\end{thm}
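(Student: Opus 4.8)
The plan is to assemble the computation already displayed above the statement into a clean induction, since its substance is there; the theorem is essentially a summary. First I would record the base step carefully. Inserting the Ansatz $X = \exp^*(\lambda a) * V_1$ into $X = \un + \lambda a \prec X$ and repeatedly invoking the dendriform axioms (\ref{A1}) and (\ref{A2}) together with the unit conventions (\ref{unit-dend}), one isolates $V_1$ and finds that it satisfies a fixed-point equation of \emph{exactly} the same shape as the original, namely $V_1 = \un + U'_1 \prec V_1$ with $U'_1$ given by (\ref{here1}). The structural point to extract is that the map sending a solution of $X = \un + c \prec X$ to its correction factor again produces a solution of the same type, merely with the datum $c=\lambda a$ replaced by the new datum $U'_1$.

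Next I would formalize the iteration. Assuming inductively that $V_n = \un + U'_n \prec V_n$, the identical chain of manipulations (with $\lambda a$ replaced by $U'_n$ and $V_1$ by $V_{n+1}$) yields $V_{n+1} = \un + U'_{n+1}\prec V_{n+1}$, and simultaneously the factorization $V_n = \exp^*(U'_n) * V_{n+1}$, where $U'_{n+1}$ is the element defined by (\ref{here3}). Telescoping these factorizations produces the announced product $X = \exp^*(U'_0)*\exp^*(U'_1)*\cdots$. To make the infinite product a legitimate element of $\overline{A}[[\lambda]]$ I would record the valuation estimate: the lowest-degree term of the closed recursion below is $-\tfrac12\,U'_n \rhd U'_n$, so the $\lambda$-adic valuation at least doubles at each step and $U'_n \in \lambda^{2^n}\overline{A}[[\lambda]]$; hence the product converges and equals $X$ because $V_n \to \un$.

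It then remains to transform the defining recursion (\ref{here3}) into the stated pre-Lie form. Here I would apply Lemma~\ref{keytoFer} to the first summand and the identity $\exp^*(-U'_n)\succ U'_n \prec \exp^*(U'_n) = \exp(R_\lhd[U'_n])(U'_n)$, established for $\lambda a$ in the proof of Theorem~\ref{thm:main} and valid verbatim for any element, to the second. Evaluating the resulting integral by $\int_0^1 \exp(sR_\lhd[U'_n])\,ds = \bigl(\exp(R_\lhd[U'_n])-1\bigr)/R_\lhd[U'_n]$ and substituting $R_\lhd=-L_\rhd$ collapses the two terms into $\bigl((L_\rhd[U'_n]+1)\exp(-L_\rhd[U'_n])-1\bigr)/L_\rhd[U'_n]$ applied to $U'_n$. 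The final input is the scalar generating-function identity
\[
 \frac{(z+1)e^{-z}-1}{z} = \sum_{l>0}\frac{(-1)^l\,l}{(l+1)!}\,z^l,
\]
verified by expanding $e^{-z}$, which turns the operator into $\sum_{l>0}\frac{(-1)^l l}{(l+1)!}L_\rhd[U'_n]^{(l)}$ and gives the stated formula for $U'_{n+1}$.

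The $Y$-case is handled by the mirror Ansatz $Y = V_1 * \exp^*(-\lambda a)$ with all products written in the opposite order; the same dendriform bookkeeping produces the reversed product and the identical recursion for the $U'_n$. I expect the only genuinely delicate point to be the induction and convergence of the second paragraph, that is, checking that each Ansatz step reproduces the fixed-point equation with no residual terms and that the valuations grow geometrically; by contrast the operator-calculus simplification, once the generating-function identity above is in hand, is routine.
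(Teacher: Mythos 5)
Your proposal follows essentially the paper's own route: the Ansatz $X=\exp^*(\lambda a)*V_1$, the inductive factorization $V_n=\exp^*(U'_n)*V_{n+1}$ yielding fixed-point equations of the same shape, Lemma~\ref{keytoFer} together with the identity $\exp(R_\lhd[U'_n])(U'_n)=\exp^*(-U'_n)\succ U'_n\prec\exp^*(U'_n)$, and the collapse of the two summands into $\bigl((L_\rhd[U'_n]+1)\exp(-L_\rhd[U'_n])-1\bigr)/L_\rhd[U'_n]$ applied to $U'_n$. The only additions are two details the paper leaves implicit --- the $\lambda$-adic valuation-doubling argument ensuring convergence of the infinite product, and the explicit expansion of $\bigl((z+1)e^{-z}-1\bigr)/z$ --- and both are correct.
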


The presentation given here in the context of dendriform algebras
reduces to Fer's classical expansion when working in the
dendriform algebra of Example~\ref{ex:RiemannDend}. Let us recall
that Iserles~\cite{Iserles84} rediscovered Fer's result calling it
the method of iterated commutators. Munthe-Kaas and
Zanna~\cite{MZ} further developed Iserles' work in the context of
Lie group integrators.

%%%%%%%%%%%%%%%%%%%%%%%%%%%%%%%%%%%%%%%%%%%%%%%%%%%%%%%%%%%%%%%%%%%%%%%%%%%%%%

\section{Applications}
\label{sect:appl}

In this section we collect some applications where the foregoing
straightforwardly implies known as well as new results.

%%%%%%%%%%%%%%%%%%%%%%%%%%%%%%%%%%%%%%%%%%%%%%%%%%%%%%%%%%%%%%%%%%%%%%%%%%%%%%

\subsection{Associative algebras}
\label{subsect:assoAlg}

Any associative algebra $(A,*)$ can be seen as a dendriform
algebra with $\prec=*$ and $\succ=0$ (or alternatively $\prec=0$
and $\succ=*$). In this case the pre-Lie operation $\lhd$ reduces
to the associative product, and equation (\ref{main5}) reduces to:
\begin{equation}
    \lambda a=\un-\exp^*(-\Omega'),
\end{equation}
hence $\Omega'=-\log^* (\un-\lambda a)$. Its exponential
$X=(\un-\lambda a)^{*-1}=\un+\lambda a+\lambda^2a*a+\cdots$ indeed
verifies $X=\un+\lambda a*X$.

%%%%%%%%%%%%%%%%%%%%%%%%%%%%%%%%%%%%%%%%%%%%%%%%%%%%%%%%%%%%%%%%%%%%%%%%%%%%%%

\subsection{Rota--Baxter algebras}
\label{subsect:RB}

Recall \cite{Baxter,E,Rota} that an associative Rota--Baxter
algebra (over a field $k$) is an associative $k$-algebra $A$
endowed with a $k$-linear map $R: A \to A$ subject to the
following relation:
\begin{equation}\label{RB}
    R(a)R(b) = R\bigl(R(a)b + aR(b) + \theta ab\bigr).
\end{equation}
where $\theta \in k$. The map $R$ is called a {\sl Rota--Baxter
operator of weight $\theta$\/}. The map $\widetilde{R}:=-\theta id
-R$ also is a weight $\theta$ Rota--Baxter map. Both the image of $R$ 
and $\tilde{R}$ form subalgebras in $A$. Associative
Rota--Baxter algebras arise in many mathematical contexts, e.g. in
integral and finite differences calculus, but also in perturbative
renormalization in quantum field theory~\cite{egm}.

A few examples are in order. On a suitable class of functions, we define the
following Riemann summation operators
\begin{eqnarray}
    R_\theta(f)(x) := \sum_{n = 1}^{[x/\theta]} \theta f(n\theta)
    \qquad\ {\rm{and}} \qquad\
    R'_\theta(f)(x) := \sum_{n = 1}^{[x/\theta]-1} \theta f(n\theta).
\label{eq:le-clou}
\end{eqnarray}
Observe that
\begin{align}
    &\biggl( \sum_{n = 1}^{[x/\theta]} \theta f(n\theta)\biggr)
     \biggl( \sum_{m = 1}^{[x/\theta]} \theta g(m\theta)\biggr)
   = \biggl( \sum_{n > m = 1}^{[x/\theta]}
        + \sum_{m > n = 1}^{[x/\theta]}
        + \sum_{m = n = 1}^{[x/\theta]}  \biggr)\theta^2 f(n\theta) g(m\theta) \nonumber \\
    &= \sum_{m = 1}^{[x/\theta]} \theta^2 \biggl(\sum_{k = 1}^{m} f\bigl(k\theta\bigr)\biggr)
                                                        g(m\theta)
     + \sum_{n = 1}^{[x/\theta]} \theta^2 \biggl(\sum_{k = 1}^{n} g\bigl(k\theta\bigr)\biggr)
                                                                             f(n\theta) \nonumber
     - \sum_{n = 1}^{[x/\theta]} \theta^2 f(n\theta)g(n\theta)\\
    &= R_\theta\bigl(R_\theta(f)g\bigr)(x) + R_\theta\bigl(fR_\theta(g)\bigr)(x) + \theta R_\theta(fg)(x).
\label{Riemsum1}
\end{align}
Similarly for the map $R'_\theta$. Hence, the Riemann summation
maps $R_\theta$ and $R'_\theta$ satisfy the weight $-\theta$ and
the weight $\theta$ Rota--Baxter relation, respectively. 

Let us give another example, very different from summation and integration maps.
Let $A$ be a $\mathbb{K}$-algebra which decomposes directly into subalgebras 
$A_1$ and $A_2$ , $A = A_1 \oplus A_2$, then the projection to $A_1$, $R: A \to A$,
$R(a_1,a_2)=a_1$, is an idempotent Rota--Baxter operator, i.e. of weight $\theta=-1$. 
Let us verify this for $a,b \in A= A_1 \oplus A_2$
\begin{eqnarray*}
    R(a)b + aR(b) - ab &=& R(a)\big(R(b) + (id-R)(b)\big) - \big(R(a) + (id-R)(a)\big)(id-R)(b)\\
                      &=&R(a)R(b) - (id-R)(a)(id-R)(b)
\end{eqnarray*}
such that applying $R$ on both sides kills the term $(id-R)(a)(id-R)(b)$  without changing the
term $R(a)R(b)$, as $R (id-R)(a)=0$ since $A_1,A_2$ are
subalgebras.

\begin{prop}\label{RBtridend}\cite {E}
Any associative Rota--Baxter algebra gives a {\sl{tridendriform
algebra}}, $(T_R,<,>,\bullet_\theta)$, in the sense that the
Rota--Baxter structure yields three binary operations:
 \allowdisplaybreaks{
\begin{eqnarray*}
    a < b := aR(b),
    \hskip 8mm
    a > b := R(a)b,
,   \hskip 8mm
    a \bullet_\theta b := \theta ab,
\end{eqnarray*}}
satisfying the tridendriform algebra axioms (\ref{DT}).
\end{prop}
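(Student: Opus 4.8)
The plan is to verify the seven tridendriform axioms (\ref{DT}) by direct substitution of the three operations $<$, $>$, $\bullet_\theta$ into each identity, where $a < b = aR(b)$, $a > b = R(a)b$, and $a \bullet_\theta b = \theta ab$. The single structural observation that renders all seven checks routine is that the Rota--Baxter relation (\ref{RB}) is precisely the statement that $R$ is a morphism for the product $\star$: unfolding $x \star y = xR(y) + R(x)y + \theta xy$ and applying $R$ gives
\begin{equation*}
    R(x \star y) = R\bigl(xR(y) + R(x)y + \theta xy\bigr) = R(x)R(y),
\end{equation*}
so that $R:(A,\star) \to (A,\cdot)$ is an algebra homomorphism. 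I would record this identity first, since it is the only nontrivial input.

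Next I would check the three axioms in the first line of (\ref{DT}). For $(x<y)<z = x<(y\star z)$, both sides equal $xR(y)R(z)$ once one uses $R(y \star z) = R(y)R(z)$ on the right. The axiom $(x>y)<z = x>(y<z)$ reduces to associativity in $A$, both sides being $R(x)\,y\,R(z)$. The axiom $(x \star y)>z = x>(y>z)$ is where the Rota--Baxter identity does the essential work: the left side is $R(x \star y)z$, which by the homomorphism property equals $R(x)R(y)z = R(x)(R(y)z)$, the right side. Thus among these three it is only the appearance of $\star$ under $R$ that requires (\ref{RB}); the rest is pure associativity.

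For the four axioms in the second line, each involves $\bullet_\theta$ on at least one side, so the scalar $\theta$ factors out and every identity collapses to associativity of the product of $A$ together with $k$-bilinearity. For instance $(x<y)\bullet_\theta z = \theta\, xR(y)\,z = x \bullet_\theta (y>z)$, while $(x>y)\bullet_\theta z = \theta\, R(x)\,y\,z = x >(y\bullet_\theta z)$, and the last one reads $(x \bullet_\theta y)\bullet_\theta z = \theta^2 xyz = x \bullet_\theta (y \bullet_\theta z)$. None of these four uses the Rota--Baxter relation at all.

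I do not expect a genuine obstacle here: the content of the proposition is that (\ref{RB}) is exactly calibrated so that the two axioms in which $\star$ occurs under $R$ hold, while the remaining five are forced by associativity of $A$ alone. The one point to watch is the bookkeeping of the weight $\theta$ in $\bullet_\theta$, so that it appears to the correct power on both sides of each identity (once in the mixed axioms, twice in the final $\bullet_\theta$-associativity).
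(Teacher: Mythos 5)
Your verification is correct: the paper itself gives no proof of this proposition (it is quoted from reference \cite{E}), and your direct check of the seven axioms, organized around the observation that (\ref{RB}) says exactly that $R$ is multiplicative for $\star$, is the standard argument and matches the paper's own remark that $R(a *_\theta b)=R(a)R(b)$ is ``just a reformulation of the Rota--Baxter relation'' (\ref{RBhom}). The bookkeeping is right throughout: the two axioms with $\star$ under $R$ (the first and third) use (\ref{RB}), the remaining five reduce to associativity and bilinearity, and the weight $\theta$ appears to the correct power in each identity.
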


The associated associative product $*_\theta$ is given by
 \allowdisplaybreaks{
\begin{eqnarray*}
\label{RBasso}
    a *_\theta b := aR(b) + R(a)b + \theta ab
\end{eqnarray*}}
It is sometimes called the ``double Rota--Baxter product'', and
verifies:
\begin{equation}
\label{RBhom}
    R( a *_\theta b) = R(a)R(b), \quad \widetilde{R}( a *_\theta b)=- \widetilde{R}(a)\widetilde{R}(b)
\end{equation}
which is just a reformulation of the Rota--Baxter relation
(\ref{RB}). It follows from the general link between dendriform and
tridendriform algebras that any Rota--Baxter algebra gives rise to a
dendriform algebra structure, $(D_R,\prec,\succ)$, given by:
 \allowdisplaybreaks{
\begin{eqnarray}\label{dendRB}
    a \prec b &:=& aR(b)+\theta ab =-a\widetilde{R}(b),\hskip 8mm a \succ b:=R(a)b.
\end{eqnarray}}
For completeness we mention the following.

\begin{rmk} Rota--Baxter Dendriform algebra {\rm{It is easy to
verify that $(D_R,\prec,\succ )$ defines a Rota--Baxter dendriform
algebra with weight $\theta$ Rota--Baxter map $R:D_R \to D_R$,
that is:
 \allowdisplaybreaks{
\begin{eqnarray*}
    \wt R(a)\prec \wt R(b) &=& \wt R(\wt R(a)\prec b + a\prec \wt R(b) +\theta a\prec b)\\
    R(a)\succ R(b) &=& R(R(a)\succ b + a\succ R(b) +\theta a\succ b)
\end{eqnarray*}}
This provides an example for a non-associative Rota--Baxter
algebra.}}
\end{rmk}

The (weight $\theta$) Rota--Baxter (left) pre-Lie operation
corresponding to (\ref{prelie1}) is given by:
\begin{eqnarray}
\label{RBpreLie}
    a \rhd b &=& R(a)b - bR(a) - \theta ba = [R(a),b] - \theta ba.
\end{eqnarray}
Let us remark that the underlying vector space $A$ equipped with
associative product $*_\theta$ is again a Rota--Baxter algebra
with weight $\theta$ Rota--Baxter map $R$. Whereas, the pre-Lie
algebra $(A,\rhd)$ is a Rota--Baxter pre-Lie algebra with weight
$\theta$ Rota--Baxter map $R$, i.e. another example for a
non-associative Rota--Baxter algebra. For completeness, recall the
known fact that, analogously to~(\ref{RBasso}), we can define a
new pre-Lie product $\rhd_\theta$ on $(A,\rhd)$, and
$(A,\rhd_\theta)$ is again a Rota--Baxter pre-Lie algebra with
weight $\theta$ Rota--Baxter map $R$.

Notice that if we suppose the algebra $A$ to be unital, the unit
(which we denote by $1$) has nothing to do with the artificially
added unit $\un$ of the underlying dendriform algebra. We extend
the Rota--Baxter algebra structure to $\overline A$ by setting:
\begin{equation}\label{unite}
    R(\un):=1,\hskip 8mm \wt R(\un):=-1 \hskip 5mm \hbox{ and
    }\un.x=x.\un=0\
    \hbox{ for any } x\in\overline A.
\end{equation}
This is consistent with the axioms (\ref{unit-dend}) which in
particular yield $\un\succ x=R(\un)x$ and $x\prec \un=-x\wt
R(\un)$, in coherence with (\ref{dendRB}).

Now let us suppose that the Rota--Baxter algebra $A$ is unital,
and introduce the  {\sl{weight $\theta \in k$ pre-Lie Magnus type
recursion}}, $\Omega'_\theta:=\Omega'_\theta(\lambda a) \in
\lambda \overline{A}[[\lambda]]$, where the Rota--Baxter operator
$R$ is naturally extended to $\overline A[[\lambda]]$ by
$k[[\lambda]]$-linearity:
 \allowdisplaybreaks{
\begin{eqnarray}
    \Omega'_\theta(\lambda a) &=& \sum_{m \ge 0} \frac{B_m}{m!}L_{\rhd}[\Omega'_\theta]^{(m)}(\lambda a),
\label{RB-MagunsMain}
\end{eqnarray}}

Theorem~\ref{thm:main} together with relation~(\ref{RBhom})
implies for a fixed $a \in A$ the following corollary.

\begin{cor}\label{cor:RBrecursions}
The elements $\hat{X}:=-\wt
R(X)=\exp\bigl(-\widetilde{R}(\Omega'_\theta(\lambda a))\bigr)$
and $\hat{Y}:=R(Y)=\exp\bigl(-R(\Omega'_\theta(\lambda a))\bigr)$
in $A[[\lambda]]$ solve the equations:
\begin{equation}
\label{RBrecursions}
    \hat{X} = 1 - \lambda \widetilde{R}(a\hat{X})
     \hskip 15mm {\rm{resp.}} \hskip 15mm
    \hat{Y} = 1 - \lambda R(\hat{Y}a).
\end{equation}
\end{cor}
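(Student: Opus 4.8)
The plan is to transport the two dendriform identities of Theorem~\ref{thm:main} from the Rota--Baxter dendriform algebra $(D_R,\prec,\succ)$ of~(\ref{dendRB}) down to the plain associative algebra $(A,\cdot)$ by applying the maps $R$ and $-\widetilde{R}$. The whole argument rests on two observations. First, the associative product $*$ attached to $D_R$ is precisely the double Rota--Baxter product $*_\theta$, since $a\prec b + a\succ b = aR(b)+\theta ab + R(a)b = a*_\theta b$. Second, relation~(\ref{RBhom}) says exactly that $R$ and $-\widetilde{R}$ are \emph{algebra homomorphisms} from $(\overline A[[\lambda]],*)$ to $(A[[\lambda]],\cdot)$, both carrying the artificially adjoined dendriform unit $\un$ to the genuine algebra unit $1$ by the conventions~(\ref{unite}). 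Finally, $\Omega'_\theta$ of~(\ref{RB-MagunsMain}) is nothing but the pre-Lie Magnus element $\Omega'$ of Theorem~\ref{thm:main} computed with the pre-Lie product~(\ref{RBpreLie}), so that theorem furnishes $X=\exp^*(\Omega'_\theta)$ and $Y=\exp^*(-\Omega'_\theta)$ as the solutions of the two equations~(\ref{eq:prelie}) inside $D_R$.

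First I would derive the exponential formulas. A one-line induction on the morphism property gives $R(Z^{*n})=R(Z)^n$ and $-\widetilde{R}(Z^{*n})=(-\widetilde{R}(Z))^n$ for all $n\ge 0$ (the case $n=0$ being $R(\un)=1$ and $-\widetilde{R}(\un)=1$); note that the sign in $\widetilde{R}(a*b)=-\widetilde{R}(a)\widetilde{R}(b)$ is exactly what makes $-\widetilde{R}$, rather than $\widetilde{R}$ itself, multiplicative. Summing over $n$ then yields $R\circ\exp^*=\exp\circ R$ and $-\widetilde{R}\circ\exp^*=\exp\circ(-\widetilde{R})$, where $\exp$ is the ordinary exponential in $A[[\lambda]]$. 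Applying these to $X=\exp^*(\Omega'_\theta)$ and $Y=\exp^*(-\Omega'_\theta)$ produces $\hat X=-\widetilde{R}(X)=\exp(-\widetilde{R}(\Omega'_\theta(\lambda a)))$ and $\hat Y=R(Y)=\exp(-R(\Omega'_\theta(\lambda a)))$, which are the closed forms asserted in the statement.

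It then remains to check that $\hat X$ and $\hat Y$ satisfy the fixed-point equations~(\ref{RBrecursions}). For this I would apply $-\widetilde{R}$, respectively $R$, directly to the defining dendriform equations~(\ref{eq:prelie}), using the dictionary $a\prec X=-a\,\widetilde{R}(X)$ and $Y\succ a=R(Y)\,a$ coming from~(\ref{dendRB}), both valid on all of $\overline A$ once $\widetilde{R}(\un)=-1$ and $R(\un)=1$ are taken into account. Thus $X=\un-\lambda a\,\widetilde{R}(X)$; applying $-\widetilde{R}$ and substituting $\widetilde{R}(X)=-\hat X$ turns the right-hand side into $1-\lambda\widetilde{R}(a\hat X)$, which is the first equation of~(\ref{RBrecursions}). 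Likewise $Y=\un-\lambda R(Y)\,a$; applying $R$ and writing $\hat Y=R(Y)$ gives $\hat Y=1-\lambda R(\hat Y a)$, the second equation of~(\ref{RBrecursions}).

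The computations themselves are short, so the only real care is needed in the bookkeeping around the adjoined dendriform unit $\un$ versus the algebra unit $1$, together with the sign conventions $R(\un)=1$ and $\widetilde{R}(\un)=-1$ of~(\ref{unite}). Getting these consistent is what makes both the exponential identities and the fixed-point equations fall out without stray signs; in particular one must use the genuine homomorphism $-\widetilde{R}$ and not $\widetilde{R}$ (which is multiplicative only up to the sign recorded in~(\ref{RBhom})) when passing the exponential through.
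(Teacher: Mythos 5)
Your proposal is correct and follows essentially the same route as the paper: the core step in both is to apply $-\widetilde{R}$ and $R$ to the dendriform fixed-point equations~(\ref{eq:prelie}), using the dictionary $a\prec X=-a\widetilde{R}(X)$, $Y\succ a=R(Y)a$ from~(\ref{dendRB}) together with the unit conventions~(\ref{unite}). The only difference is that you additionally spell out the closed exponential forms by proving that $R$ and $-\widetilde{R}$ are multiplicative for $*$, a point the paper leaves implicit in its appeal to~(\ref{RBhom}).
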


\begin{proof}
Recall the link between dendriform and tridendriform algebras, and the
Rota--Baxter relation, i.e. relations~(\ref{RBtridend}).
Using~(\ref{RBhom}), we have:
 \allowdisplaybreaks{
\begin{eqnarray*}
    \hat{X} &=& -\wt R(X)\\
            &=& -\wt R(\un + \lambda a\prec X)
             =  -\wt R\big(\un - \lambda a\wt R(X)\big)
             =  -\wt R\big(\un + \lambda a \hat{X}\big)\\
            &=& 1 - \lambda \wt R(a\hat{X}).
\end{eqnarray*}}
and similarly:
\begin{eqnarray*}
    \hat{Y}  &=& R(Y)\\
            &=& R(\un - \lambda Y\succ a)=R(\un-\lambda R(Y)a)=R(\un-\lambda \hat{Y} a)\\
            &=& 1 - \lambda R(\hat{Y} a).
\end{eqnarray*}
\end{proof}

We may summarize the picture we developed so far in the following diagram relating the pre-Lie Magnus expansion
to the original Magnus expansion as well as to the Spitzer's identity:

\begin{equation*}
    \xymatrix{
         & {\hbox{{\eightrm{$\exp\!\bigg(\! R\Big(\sum\limits_{m\ge 0} \frac{B_m}{m!}\ L_{\bullet_R}[\Omega'_\theta]^{(m)}(\lambda a)\Big)\!\bigg)$}}}
                                                    \atop \theta \neq 0,\ non-com. }
                                                     \ar[dd]^{{com.\atop \theta \to 0}}
                                                     \ar[rd]_{\theta \neq 0 \atop com.}
                                                     \ar[ld]^{\theta \to 0 \atop {non-com.}}
                                                     &\\
   {\hbox{\bf{$\exp\!\big(\Omega_0(a)\!\big)$}}
    \atop {\rm{Magnus}}}
   \ar[rd]^{com.}\ar@<5pt>[ru]^{equation
         (\ref{conjecturebis})\ \ }  &
                                                    & {\hbox{{$\exp\!\bigg(\!R\Big(
                                                     \!\frac{\log(1+ \theta a \lambda)}{\theta}
                                                     \!\Big)\!\bigg)$}}
                                                      \atop {\rm{cl.\ Spitzer}}}
                                                      \ar[ld]_{\theta \to 0}\\
                   & {\hbox{\eightrm{$\exp\!\big(R(a)\big)$}}
                      \atop \theta = 0,\ com.}&
                }
\end{equation*}
On the left wing we regard the limit $\theta \to zero$ leading to Magnus' expansion, whereas on the left wing we observe the 
reduction of the pre-Lie Magnus expansion to the logarithm in Spitzer's identity when the underlying algebra is commutative. Finally
both formulas reduce to the classical exponential solution in a commutative weight zero Rota--Baxter algebra. We refer the reader 
to \cite{egm} for more details.

\medskip

We define here the {\sl{weight $\theta \in k$ pre-Lie Fer type
recursion}}, $U'_{n,\theta}:=U'_{n,\theta}(a) \in \lambda
\overline{A}[[\lambda]]$:
 \allowdisplaybreaks{
\begin{eqnarray}
    U'_{n+1,\theta}(\lambda a) &=& \sum_{l>0} \frac{(-1)^l l}{(l+1)!} L_\rhd[U'_{n,\theta}]^{(l)}(U'_{n,\theta}),\quad\ n \ge 0.
\label{RB-Ferexp}
\end{eqnarray}}

By the same line of arguments Theorem~\ref{thm:Fer} implies the
next corollary.

\begin{cor}\label{cor:RBrecursions2}
The element
$$
    \hat{X}:= \overrightarrow{\prod\limits_{n \ge 0}} \exp\Bigl(-\widetilde{R}\bigl(U'_{n,\theta}( a)\bigr)\Bigr)
    \qquad\
    \hat{Y}:= \overleftarrow{\prod\limits_{n \ge 0}} \exp\Bigl(- R\bigl(U'_{n,\theta}( a)\bigr)\Bigr)
$$
in $A[[\lambda]]$ solves the above recursions.
\end{cor}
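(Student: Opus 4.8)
The plan is to obtain this corollary from Theorem~\ref{thm:Fer} in precisely the manner in which Corollary~\ref{cor:RBrecursions} was obtained from Theorem~\ref{thm:main}: one transports the dendriform product expansions into the Rota--Baxter setting by applying the maps $R$ and $-\wt R$, which are multiplicative for the double product $*_\theta$. First I would note that the ambient dendriform algebra is $D_R$ of (\ref{dendRB}), whose associated left pre-Lie product $\rhd$ is exactly the Rota--Baxter pre-Lie product (\ref{RBpreLie}). Hence the coefficients $U'_{n,\theta}$ defined by the recursion (\ref{RB-Ferexp}) coincide with the abstract dendriform Fer coefficients $U'_n$ of Theorem~\ref{thm:Fer} computed inside $D_R$. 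Consequently $X=\overrightarrow{\prod\limits_{n \ge 0}}^{*}\exp^*(U'_{n,\theta})$ and $Y=\overleftarrow{\prod\limits_{n \ge 0}}^{*}\exp^*(-U'_{n,\theta})$ already solve the dendriform equations $X=\un+\lambda a\prec X$ and $Y=\un-\lambda Y\succ a$ by that theorem.

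The central step is the morphism property encoded in (\ref{RBhom}). From $R(u*_\theta v)=R(u)R(v)$ one sees that $R$ is multiplicative for $*=*_\theta$, while $\wt R(u*_\theta v)=-\wt R(u)\wt R(v)$ shows that $\psi:=-\wt R$ satisfies $\psi(u*_\theta v)=\psi(u)\psi(v)$, so $\psi$ is multiplicative as well. With the unit conventions (\ref{unite}) one has $R(\un)=1$ and $\psi(\un)=-\wt R(\un)=1$, so both $R$ and $\psi$ are \emph{unital} algebra homomorphisms from $(\overline A,*)$ to $(A,\cdot)$. Each therefore carries a $*$-exponential to an ordinary exponential and, propagated over the products, a $*$-product of exponentials to the corresponding ordinary product:
\[
\psi\Big(\overrightarrow{\prod\limits_{n \ge 0}}^{*}\exp^*(U'_{n,\theta})\Big)=\overrightarrow{\prod\limits_{n \ge 0}}\exp\big(-\wt R(U'_{n,\theta})\big), \qquad R\Big(\overleftarrow{\prod\limits_{n \ge 0}}^{*}\exp^*(-U'_{n,\theta})\Big)=\overleftarrow{\prod\limits_{n \ge 0}}\exp\big(-R(U'_{n,\theta})\big).
\]
These are exactly the two infinite products $\hat X$ and $\hat Y$ of the statement.

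It remains to check that $\hat X=-\wt R(X)$ and $\hat Y=R(Y)$ solve the equations (\ref{RBrecursions}), and this is the computation already performed in Corollary~\ref{cor:RBrecursions}. That argument uses only that $X$ solves its dendriform equation together with the identity $a\prec X=-a\wt R(X)$ from (\ref{dendRB}) and $\wt R(\un)=-1$: one computes $\hat X=-\wt R(\un+\lambda a\prec X)=-\wt R(\un-\lambda a\wt R(X))=-\wt R(\un+\lambda a\hat X)=1-\lambda\wt R(a\hat X)$, and symmetrically $\hat Y=1-\lambda R(\hat Y a)$. Since this reasoning is indifferent to whether $X$ and $Y$ are written as single $*$-exponentials or as the $*$-products above, it applies verbatim to the Fer expansion.

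The anticipated obstacle is bookkeeping rather than conceptual. One must ensure that the finite multiplicativity of $R$ and $\psi$ genuinely passes to the infinite $*$-products. This is guaranteed by working $\lambda$-adically: an induction on the recursion (\ref{RB-Ferexp}) shows that each $U'_{n,\theta}$ has $\lambda$-valuation at least $n+1$, so the products converge in the complete ring $A[[\lambda]]$ and the homomorphisms commute with the limit. The second delicate point is keeping the two units of (\ref{unite}) apart---the artificially added dendriform unit $\un$, with $R(\un)=1$ and $\wt R(\un)=-1$, versus the algebra unit $1$---since this is where sign errors would most easily enter.
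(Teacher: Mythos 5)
Your proposal is correct and follows essentially the same route as the paper, whose entire proof is the remark that ``by the same line of arguments'' (i.e.\ the multiplicativity relations (\ref{RBhom}) together with the computation in Corollary~\ref{cor:RBrecursions}) Theorem~\ref{thm:Fer} yields the result: you apply the unital morphisms $R$ and $-\wt R$ factorwise to the infinite $*$-product solutions $X$ and $Y$ and then repeat the verification $\hat{X}=1-\lambda\wt R(a\hat{X})$, $\hat{Y}=1-\lambda R(\hat{Y}a)$ verbatim. Your $\lambda$-adic convergence check (valuation of $U'_{n,\theta}$ growing with $n$, in fact at least $2^n$) is a detail the paper leaves implicit, and it is a welcome addition.
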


Associative Rota--Baxter algebras are essentially characterized by
a natural factorization theorem related to the
equations~(\ref{RBrecursions}), see Atkinson~\cite{atkinson} for
more details.

Recall \cite{egm} that there is a unique (usually non-linear)
bijection $\chi_\theta:\lambda A[[\lambda]]\to \lambda
A[[\lambda]]$, which is a deformation of the identity, such that
for any $\alpha \in \lambda A[[\lambda]]$ we have\footnote{Notice
the sign change compared to \cite{egm}. This is due to a
conventional sign change in the definition of the weight: an
idempotent Rota--Baxter operator is now of weight $-1$ with the
new convention. The two recursive formulae for $\chi_\theta$ which
follow thus differ a little bit from those given in the
aforementioned reference.}:
\begin{equation*}
    \exp(-\theta\alpha) = \exp\bigl(R\circ \chi_\theta(\alpha)\bigr)\exp\bigl(\wt R\circ\chi_\theta(\alpha)\bigr).
\end{equation*}
The so-called $\theta$ BCH-recursion map $\chi_\theta$ is
recursively given by:
\begin{equation*}
    \chi_\theta(\alpha) =\alpha + \frac{1}{\theta}\mop{BCH}\bigl(R\circ\chi_\theta(\alpha),\, \wt R\circ\chi_\theta(\alpha) \bigr),
\end{equation*}
or alternatively:
\begin{equation*}
    \chi_\theta(\alpha) = \alpha +
    \frac{1}{\theta}\mop{BCH}\bigl(\theta\alpha,\, R\circ \chi_\theta(\alpha) \bigr),
\end{equation*}
where $\mop{BCH}(x,y)$ is the Baker--Campbell--Hausdorff series
defined by $\exp(x)\exp(y)=\exp\bigl(x+y+\mop{BCH}(x,y)\bigr)$.
Recall Atkinson's theorem \cite{atkinson}:
\begin{equation*}
    \hat Y(1 - \theta \lambda a)\hat X = 1,
\end{equation*}
hence $\hat Y^{-1}\hat X^{-1} = 1-\theta \lambda a$. We deduce
immediately from the very definitions of $\hat X$ and $\hat Y$
that we have:
\begin{equation}\label{rwtr}
    1-\theta \lambda a = \exp(-\theta\alpha_\theta)=\exp\bigl(R(\Omega'_\theta)\bigr)\exp\bigl(\wt R(\Omega'_\theta)\bigr),
\end{equation}
with $\alpha_\theta:=\alpha_\theta(\lambda a):=-\frac{1}{\theta}\log (1 - \theta \lambda a)$. We then infer from (\ref{rwtr}) the equality:
\begin{equation}
\label{conjecture}
    \Omega'_\theta = \Omega'_\theta(\lambda a)
           = \chi_{\theta}(\alpha_\theta)
           = \chi_\theta\Bigl(-\frac{\log (1-\theta \lambda a)}{\theta}\Bigr),
\end{equation}
which was conjectured in~\cite{Kalliope} in a similar context.
From~(\ref{conjecture}) we get for any $\alpha \in \lambda
A[[\lambda]]$:
\begin{equation}
\label{conjecturebis}
    \chi_\theta\big(\alpha) = \Omega'_\theta\Bigl(\frac{1-\exp(\theta\alpha)}{\theta}\Bigr).
\end{equation}

%%%%%%%%%%%%%%%%%%%%%%%%%%%%%%%%%%%%%%%%%%%%%%%%%%%%%%%%%%%%%%%%%%%%%%%%%%%%%%

\subsection{The weight zero case and the classical Magnus expansion}
\label{subsect:clMagnus}

For a weight $\theta = 0$, Rota--Baxter algebra the pre-Lie
product~(\ref{RBpreLie}) reduces to:
\begin{eqnarray}
\label{zeroRBpreLie}
    a \rhd b&=& [R(a),b].
\end{eqnarray}
This simplifies the weighted pre-Lie Magnus type
recursion~(\ref{RB-MagunsMain}) to the {\sl{classical Magnus
recursion}}:
 \allowdisplaybreaks{
\begin{eqnarray}
    \Omega'_0(\lambda a) = \chi_0(\lambda a)
                &=& \sum_{m \ge 0} \frac{B_m}{m!}L_{\rhd}[\chi_0]^{(m)}(\lambda a) \notag\\
                &=& \sum_{m \ge 0} \frac{B_m}{m!}ad_{R(\chi_0)}^{(m)}(\lambda a).
\label{clMaguns}
\end{eqnarray}}

An important example for a weight zero Rota--Baxter algebra is any
algebra $F$ of operator-valued functions on the real line, closed
under integrals $\int_0^x$, say, smooth $n \times n$ matrix-valued
functions. Recall example (\ref{ex:RiemannDend}), showing that $F$ is a dendriform 
algebra. The associative product then writes:
$$
    A * B(x) := A(x)\cdot \int\limits_0^x B(y)\, dy + \int\limits_0^x A(y)\, dy \cdot B(x)
$$
such that:
$$
    \exp\Bigl( \int_0^x A(y)\, dy \Bigr) =  \int_0^x \bigl(\exp^*( A(y))\bigr) \, dy.
$$
Recall our convention (\ref{unite}) for the dendriform unit. Then the classical Magnus 
recursion~(\ref{MagnusOmega}), $\Omega(t)=\Omega(A)(t)$, for $R:=\int^t_0$ reads:
 \allowdisplaybreaks{
\begin{eqnarray*}
    \dot{\Omega}(A)(t)=\Omega'_0(A)(t) = \sum_{m \ge 0} \frac{B_m}{m!}L_{\rhd}[\dot{\Omega}(A)]^{(m)}(A)(t)
                      =\sum_{m \ge 0} \frac{B_m}{m!}ad_{\int^t_0 \dot{\Omega}(A)(s)\,ds}^{(m)}(A(t)).
\end{eqnarray*}}
Similarly, in the case of the weighted pre-Lie Fer expansion as
expected we recover the original formula. 

We showed in reference \cite{egm} that the recursion $\chi_\theta$
reduces to the classical Magnus recursion~(\ref{clMaguns}) in the
limit $\theta \to 0$:
$$
    \chi_\theta(\lambda a) \xrightarrow{\theta \to 0} \chi_0(\lambda a).
$$
We recover this result from (\ref{conjecturebis}),
(\ref{RB-MagunsMain}), and from the fact that
$\alpha_\theta(\lambda a)\to \lambda a$ when $\theta \to 0$.
Hence, we have proven that the weight $\theta$
BCH-recursion~$\chi_\theta$ derives naturally from the classical
Magnus recursion via the non-linear change of variable
$\alpha_\theta$.

\medskip

Let us finish this brief note with an interesting observation to
be further explored in a future work. In the context of
Example~\ref{ex:RiemannDend} and the initial value
problem~(\ref{IVP1}) we have seen that the classical Magnus and
Fer recursion can be rewritten using the (weight zero) pre-Lie
product introduced by the integral operator $\int_0^x$:
 \allowdisplaybreaks{
\begin{eqnarray*}
    \dot{\Omega}_0(A)(s)
             =  \sum_{m \ge 0} \frac{B_m}{m!}L_{\rhd}[\dot{\Omega}_0(A)(s)]^{(m)}\bigl(A(s)\bigr),
\end{eqnarray*}}
and
 \allowdisplaybreaks{
\begin{eqnarray*}
    \dot{U}_{n+1,0}(A)(s)
            = \sum_{l>0} \frac{(-1)^l l}{(l+1)!} L_\rhd[\dot{U}_{n,0}(A)(s)]^{(l)}\bigl(\dot{U}_{n,0}(A(s))\bigr).
\end{eqnarray*}}

\medskip

It seems to be natural to ask whether the extra pre-Lie structure
in the case of the Magnus as well as the Fer expansion implies a
possible reduction in the number of terms. Here we would like to
indicate that this is the case, by explicit verification up to
fifth order in the case of the Magnus expansion, using the
dendriform algebra respectively the induced pre-Lie product
presented in Example~\ref{ex:RiemannDend}. We should emphasize the
fact, that the following applies, of course, to the more general
weight $\theta$ pre-Lie Magnus and Fer type
recursions~(\ref{RB-MagunsMain}), (\ref{RB-Ferexp}), respectively.

We introduce a dummy parameter $\lambda$ for convenience.
Obviously, as the (left) pre-Lie relation~(\ref{prelie1}) is a
ternary one we expect it to be available only from third order
upwards in this parameter.
 \allowdisplaybreaks{
\begin{align}
      \Omega(t)=\int_0^t \dot{\Omega}_0(A\lambda)(s) \, ds
                     &=  \lambda            \int_0^t A(s)\, ds
                       - \lambda^2 \frac 12 \int_0^t A \rhd A(s)\, ds \\
                     & + \lambda^3          \int_0^t \Bigl(  \frac{1}{12}\bigl(A \rhd (A \rhd A)\bigr)(s)
                                                           + \frac{1}{4} \bigl((A \rhd A) \rhd A\bigr)(s)\Bigr)\, ds \nonumber \\
                     & + \lambda^4 \int_0^t \bigg(
                                       - \frac{1}{8}    \bigl((A \rhd A) \rhd  A \bigr) \rhd A(s)
                                       - \frac{1}{24} \bigl(A \rhd (A \rhd A)\bigr) \rhd A(s)                  \nonumber \\
                     & \hskip 16mm - \frac{1}{24} \Bigl(    A \rhd \bigl((A \rhd A) \rhd A \bigr)(s)
                                             + (A \rhd A) \rhd (A \rhd A)(s) \Bigr) \bigg)\, ds +
                                              \mathcal{O}(5)
                                              \nonumber
\end{align}}
Recall that $A \rhd A(s) =[\int_0^s A(u)\, du,\,\ A(s)]$. We see
that at third order no further reduction of terms is possible. At
fourth order we find a reduction to two terms using the pre-Lie
relation~(\ref{prelie1}). Indeed, one verifies that:
 \allowdisplaybreaks{
\begin{eqnarray*}
    \frac{1}{8} \bigl((A \rhd A) \rhd  A \bigr) \rhd A
  + \frac{1}{24} \Bigl( \bigl(A \rhd (A \rhd A)\bigr) \rhd A
                            + A \rhd \bigl((A \rhd A) \rhd A \bigr)
                            +(A \rhd A) \rhd (A \rhd A) \Bigr)
\end{eqnarray*}}
using that, thanks to the pre-Lie relation:
$$
    (A \rhd A) \rhd (A \rhd A) =
    \bigl((A \rhd A) \rhd  A \bigr) \rhd A
    - \bigl(A \rhd (A \rhd A)\bigr) \rhd A
    + A \rhd \bigl((A \rhd A) \rhd A \bigr)
$$
equals:
$$
 \frac{1}{6} \bigl((A \rhd A) \rhd  A \bigr) \rhd A
 + \frac{1}{12} A \rhd \bigl((A \rhd A) \rhd  A \bigr).
$$
At fifth order we observe a reduction in the number of terms from
ten to seven. More details and a complete analysis of this
apparently new structure in the Magnus (and Fer) expansion will be
provided in a forthcoming work.

Following the seminal work of Iserles and
N{\o}rsett~\cite{IsNo99}, using planar rooted binary trees to
encode the combinatorial setting in Magnus expansion, we may
present the foregoing calculation more transparently. The binary
tree:
$$
    \tree \quad \sim \ A \rhd A.
$$
At fourth order we have:
$$
    \frac{1}{8} \ \treeC  \hskip 10mm
    +  \hskip 5mm \frac{1}{24}\biggl(
    \hskip 3mm
    \treeG  \hskip 10mm +  \hskip 5mm
    \treeF  \hskip 10mm +  \hskip 5mm
    \treeD  \hskip 8mm
    \bigg)
$$
which reduces to:
$$
    \frac{1}{6}\ \treeC \hskip 10mm +  \hskip 2mm \frac{1}{12}\ \treeF
$$
thanks to the left pre-Lie relation:
$$
    \treeD \hskip 10mm - \hskip 5mm \treeC
           \hskip 10mm = \hskip 5mm
    \treeF \hskip 10mm - \hskip 5mm \treeG
$$

%%%%%%%%%%%%%%%%%%%%%%%%%%%%%%%%%%%%%%%%%%%%%%%%%%%%%%%%%%%%%%%%%%%%%%%%%%%%%%
\vspace{0.5cm}
%%%%%%%%%%%%%%%%%%%%%%%%%%%%%%%%%%%%%%%%%%%%%%%%%%%%%%%%%%%%%%%%%%%%%%%%%%%%%%

\textbf{Acknowledgements}

\smallskip

The first named author acknowledges greatly the support by the
European Post-Doctoral Institute. He thanks A.~Iserles,
H.~Munthe-Kaas and B.~Owren for helpful discussions, and the
Department of Applied Mathematics and Theoretical Physics at
Cambridge University for warm hospitality. A special thanks goes
to A.~V\"olzmann-Scheiding at MPIM Bonn. We would like to thank
Fr\'ed\'eric Fauvet for inviting us to Institut de Recherche
Math\'ematique Avanc\'ee, Universit\'e Louis Pasteur were part of
this work was done. We would like to thank Fr\'ed\'eric Patras and
Jose M.~Gracia-Bond\'{\i}a for helpful discussions, and Jean-Louis Loday for
useful comments.

\end{document}